\documentclass[a4paper,fleqn]{cas-sc}

\usepackage[authoryear]{natbib}

\def\tsc#1{\csdef{#1}{\textsc{\lowercase{#1}}\xspace}}
\tsc{WGM}
\tsc{QE}

\usepackage{amsmath,amsthm,amsfonts,amssymb}  
\usepackage{mathtools}
\usepackage{hyperref}
\usepackage{cleveref}
\usepackage{xcolor}
\usepackage{enumitem}

\newtheorem{theorem}{Theorem}[section]

\crefname{Proposition}{proposition}{propositions}
\Crefname{Proposition}{Proposition}{Propositions}

\crefname{Corollary}{corollary}{Corollaries}
\Crefname{Corollary}{Corollary}{Corollaries}

\crefname{Result}{result}{results}
\Crefname{Result}{Result}{Results}

\newtheorem{lemma}{Lemma}[section]
\crefname{Lemma}{lemma}{lemmas}
\Crefname{Lemma}{Lemma}{Lemmas}

\crefname{Remark}{remark}{remarks}
\Crefname{Remark}{Remark}{Remarks}

\newtheorem{definition}{Definition}[section]
\crefname{Definition}{definition}{definitions}
\Crefname{Definition}{Definition}{Definitions}

\crefname{Example}{example}{examples}
\Crefname{Example}{Example}{Examples}

\newcommand{\Cov}{\mathsf{Cov}}
\newcommand{\trace}{\mathsf{trace}}
\newcommand{\Var}{\mathsf{Var}}
\newcommand{\E}{\mathsf{E}}
\newcommand{\D}{\mathsf{D}}
\newcommand{\A}{\mathsf{A}}
\newcommand{\B}{\mathsf{B}}
\newcommand{\Eof}[1]{\E\left(#1\right)}
\newcommand{\prob}{\mathsf{P}}

\newcommand{\cT}{{\mathcal T}}

\newcommand{\cM}{{\mathcal M}}

\newcommand{\cB}{{\mathcal B}}

\newcommand{\cA}{{\mathcal A}}
\newcommand{\cE}{{\mathcal E}}
\newcommand{\cF}{{\mathcal F}}

\newcommand{\ve}{\varepsilon}
\newcommand{\mbR}{{\mathbb R}}

\newcommand{\mbH}{{\mathbb H}}

\newcommand{\llim}[1]{\mathop{L^2\text{-}\lim}\limits_{#1}\,}
\newcommand{\lplim}[1]{\mathop{L^p\text{-}\lim}\limits_{#1}\,}

\newcommand{\myExp}[1]{\exp\left(#1\right)}
\newcommand{\differential}[1]{\mathrm{d}#1}

\newcommand{\norm}[1]{\| #1 \|}
\newcommand{\dist}{\mathsf{d}}

\begin{document}
\let\WriteBookmarks\relax
\def\floatpagepagefraction{1}
\def\textpagefraction{.001}

\shorttitle{Self-intersection local times in stochastic flows}    

\shortauthors{Izyumtseva and KhudaBukhsh}  

\title [mode = title]{Self-intersection local times for Volterra Gaussian processes in stochastic flows with interaction}  



%

\author[]{Olga Izyumtseva}[orcid=0009-0002-4395-2020]



\ead{Olga.Iziumtseva1@nottingham.ac.uk}

\ead[url]{https://www.nottingham.ac.uk/mathematics/people/olga.iziumtseva1}

\credit{Conceptualization of this study, Methodology, Writing}


\author[]{Wasiur R. KhudaBukhsh}[orcid=0000-0003-1803-0470]

\cormark[1]


\ead{wasiur.khudabukhsh@nottingham.ac.uk}

\ead[url]{https://www.wasiur.xyz/}

\credit{Methodology, Writing}

\affiliation[]{organization={School of Mathematical Sciences, University of Nottingham},
            addressline={University Park}, 
            city={Nottingham},
            postcode={NG7 2RD}, 
            state={Nottinghamshire},
            country={United Kingdom}}

\cortext[1]{Corresponding author}










\begin{abstract}
  In this paper, we study self-intersection local times for a stochastic process $x(u(\cdot),t)$, where $u$ is a Gaussian process of the form $u(t)=\int^t_0k(t,s)\differential{w(s)}$, $k$ is a deterministic kernel of the  Volterra type, $w$ is a Wiener process, and $x$ is a solution to the \emph{equation with interaction}. Equations with interaction are a class of interacting particle system described by stochastic differential equations whose coefficients depend on a random measure (initial distribution of particles) transformed by the flow of solutions. Considering the occupation measure of $u$ as the initial condition for the equation with interaction allows us to define a stochastic flow with interaction driven by self-intersection local times of the process $u$. The study of such stochastic differential equations whose coefficients carry information about the geometric properties of curves is new. They previously appeared only for deterministic differential equations and smooth curves, where the geometric characteristics typically considered are length, curvature, and so on. In this paper, we prove the existence of multiple self-intersection local times for the process $x(u(\cdot),t)$ and establish a ``change of variable formula" that allows us to describe self-intersection local times for the process $x(u(\cdot),t)$ in terms of the weighted self-intersection local times for the process $u.$ We describe the corresponding asymptotics of the self-intersection local times for $x(u(\cdot),t)$ for large $t$. Moreover, the existence of weighted self-intersection local times is established for a large class of unbounded weights, which is of independent interest. 
\end{abstract}



\begin{keywords}
 Local times \sep Self-intersection local times\sep Volterra Gaussian processes \sep Stochastic flows \sep Stochastic differential equations \sep Measure-valued process
\end{keywords}

\maketitle

\tableofcontents

\section{Introduction}
\label{sec:introduction}
Our primary objective in this paper is to study geometric characteristics of Volterra Gaussian processes in  stochastic flows with interaction. The interest in this problem is motivated by two fascinating questions that have been actively studied for more than forty years in analysis and probability theory literature. The first question is related to the evolution of geometric characteristics for smooth curves and Borel sets under the action of stochastic flows (for example, see \cite{carverhill1985flows}, \cite{le1985isotropic}, \cite{baxendale1986isotropic}, \cite{zirbel1997mass}, \cite{zirbel1997translation}, \cite{dimitroff2009dispersion}, \cite{vadlamani2006global}).
The literature on the evolution of geometric characteristics for non-smooth random curves such as the Wiener process is rather sparse. This is because defining appropriate geometric characteristics for non-smooth random curves is itself a nontrivial technical task. The early works of \cite{varadhan1969appendix}, \cite{rosen1986renormalized}, \cite{dynkin1988regularized}, and in particular, \cite{le1990wiener} lay the foundation of self-intersection local times as the fitting mathematical object to describe the geometry of non-smooth random curves. 
The second question arises from the desire to construct an evolution equation whose coefficients are driven by the geometric characteristics of an evolving object. A number of authors have studied the evolution of randomly moving curves and manifolds, see, for example, \cite{funaki1983random}, \cite{kardar1986dynamic}. However, the proposed evolutions do not take into account the changing of geometric characteristics of an evolving curve. One of the first attempts to take this into account is \cite{dorogovtsev2019self}. In the deterministic context, the time evolution of a knot is often described by the filament equation whose coefficients are driven by the curvature.  We refer the interested readers to \cite{arnold2009topological} for a detailed discussion on this topic. In this paper, we will use the self-intersection local times as a proxy for the geometry of a random curve, and study an (infinite-dimensional) stochastic differential equation whose coefficients are driven by the said geometry. 






Given a fixed positive integer $d$, let $W$ be a Brownian sheet defined on the Borel subsets of $[0,\infty)\times\mathbb{R}^d$ with  finite Lebesgue measure (see \cite[Example 3.5]{Lifshits2012LecturesOnGaussianProcesses}) and let $\mu_0$ be a (possibly random) probability measure on $\cB(\mathbb{R}^d),$ the Borel $\sigma$-algebra on $\mathbb{R}^d.$ 
The following stochastic differential equation (SDE) introduced in  \cite{dorogovtsev2023measure} 
\begin{align}
\label{eq:interaction}
\begin{cases}
\differential{x}(u,t)=a(x(u,t),\mu_t)\differential{t}+\int_{\mbR^d}b(x(u,t),\mu_t,z)W(\differential{t},\differential{z}),\\	
x(u,0)=u,\ u\in\mbR^d,\\
\mu_t=\mu_0\circ x(\cdot,t)^{-1},
\end{cases}
\end{align}
is called the  \emph{equation with interaction}. It was proved in \cite[Theorem 2.3.1, p. 43]{dorogovtsev2023measure} that if the coefficients satisfy a Lipschitz condition with respect to the spatial and the measure-valued variables, then there exists the unique solution to \eqref{eq:interaction}. Moreover, if the coefficients are two times continuously differentiable with respect to the spatial variable, then, for each $t\geqslant 0, $  the $x(\cdot,t):\mbR^d\to\mbR^d$ is a diffeomorphism, almost surely \citep[Theorem 6.3.1]{dorogovtsev2023measure}.
For other properties of the equation with interaction, such as intermittency, ergodicity, Krylov--Veretennikov expansion, we refer the interested reader to \cite{dorogovtsev2023intermittency}, \cite{chen2024exponential}, \cite{dorogovtsev2025ergodic}, \cite{dorogovtsev2026krylov}
and the references therein. Our aim is to study the geometric characteristics of an $\mbR^d$-valued Volterra Gaussian process in the stochastic flow generated by the equation with interaction. 

Let $\{u(t): t\in[0,1]\}$ be an $\mbR^d$-valued centred Gaussian process. It follows from Le Gall's work on the asymptotic expansion of the Wiener sausage \citep{le1990wiener} that the following formal expression, called the $k$-multiple self-intersection local time of the process $u,$
 \begin{align}
\label{eq:SILT_formal}
T_{k}^{u}=\int_{\Delta_k}\left(\prod^{k-1}_{i=1}\delta_0(u(t_{i+1})-u(t_i))\right)\differential{t_1}\cdots \differential{t_k},
 \end{align}
 where  $\Delta_k=\{t_1,\ldots, t_k\in[0,1]:\ 0\leqslant t_1\leqslant\ldots\leqslant t_k\leqslant 1\},\ k\geqslant 2,$ and $\delta_0$ is the $d$-dimensional Dirac delta function at zero, can be considered as geometric characteristics of the process $u.$ The quantity $T_{k}^{u}$ registers times $t_1\neq t_2\neq\cdots\neq t_k$
 such that $u(t_1)=u(t_2)=\cdots=u(t_k),$ i.e., $T_{k}^{u}$ measures the amount of time the process $u$ spends in small neighbourhoods of its self-intersection points of multiplicity $k$. 
 Self-intersection local times being the geometric characteristics of continuous non-smooth random processes have widespread  applications; see \cite{varadhan1969appendix}, \cite{jung2014tanaka}, \cite{westwater1982edwards}, \cite{van2001self}, \cite{van2003large}, \cite{chen2019parabolic}, \cite{denHollander2009random}, and discussions and references therein.
 The existence of multiple points for random processes is a nontrivial question. For a Wiener process, it was studied in \cite{dvoretzky1954multiple,dvoretzky1950double,dvoretzky1958points,dvoretzky1957triple}, for Gaussian random fields including fractional Brownian fields, and solutions to the systems of stochastic heat and wave equations, it was studied in \cite{xiao2002local}, \cite{goldman1981points}, \cite{kono1978double}, \cite{rosen1984self}, \cite{talagrand1998multiple}, \cite{dalang2012critical}, \cite{dalang2015multiple}, \cite{dalang2021multiple}.

To give a rigorous meaning to the formal expression in  \eqref{eq:SILT_formal}, consider a family of approximations $\{f_{\ve} :  \ve >0 \}$ that weakly converges to  $\delta_0$ as $\ve\to0$ in the sense that 
for any continuous bounded 
 function $\phi:\mathbb{R}^d\to\mathbb{R}$, we have 
 $$
 \int_{\mathbb{R}^d}\phi(z)f_{\ve}(z)\differential{z}\to \phi(0),
  $$
as $\ve \to 0$ 
and define approximations for the self-intersection local time in \eqref{eq:SILT_formal} of $u$ as follows
$$
T_{\ve,k}^{u}=\int_{\Delta_k}\left(\prod^{k-1}_{i=1}f_\ve(u(t_{i+1})-u(t_i))\right)\differential{t_1}\ldots \differential{t_k}.
$$
\begin{definition}
\label{def:SILT} The random variable 
$
T^{u}_k \coloneq \lplim{\ve\to0} T^{u}_{\ve,k}
$
is said to be the $k$-multiple self-intersection local time for $u$  if the limit exists for some $p\geqslant 2.$ If the limit exists, then the random variable $T^{u}_k$ does not depend on the choice of the approximating family $\{f_{\ve} :  \ve >0 \}$.
\end{definition}



\noindent
\vspace{0.25em}
\fcolorbox{black}{black!05}{
  \parbox{0.97\textwidth}{%
        Assume that $x$ is a stochastic flow of random diffeomorphisms satisfying \eqref{eq:interaction}, and $u$ is a Volterra Gaussian process. {We are interested in the geometry of the image $x(u, t) \equiv \{ x(u(s), t) : s \in [0, 1]\}$ of the process $u$ under the random diffeomorphism $x(\cdot,t)$ for $t>0$}. With this in mind, we adopt Le Gall's approach to geometry of random curves, and prove the existence of self-intersection local times for the process $\{x(u(s),t) :  s\in[0,1]\},$ for each $t>0$. We also describe its asymptotics for large $t$. 
    }%
}
\vspace{0.25em}

The asymptotics of self-intersection local times for stochastic processes is the fascinating question by itself, and is being actively pursued. See \cite{chen2004large}, \cite{chen2008intersection}, \cite{chen2023exponential}, \cite{lyu2025exponential}, \cite{dorogovtsev2019self}, \cite{gartner2000moment} to get a glimpse of some recent developments.
In order to construct self-intersection local times for the process $\{x(u(s),t): s\in[0,1]\}$, we need the notion of weighted self-intersection local times formally defined as
 \begin{align}
\label{eq:weighted_SILT_formal}
T_{k}^{u}(\rho) =\int_{\Delta_k}\rho(u(t_1))\prod^{k-1}_{i=1}\delta_0(u(t_{i+1})-u(t_i))\, \differential{t_1}\cdots \differential{t_k},
 \end{align}
 where $\rho:\mbR^d\to\mbR$ is a weight function. To motivate this notion, let us consider the process $F(u(s)),\ s\in[0,1]$, where $F:\mbR^d\to\mbR^d$ is a deterministic diffeomorphism mapping $y \in \mathbb{R}^d$ to $F(y) \equiv (F_1(y), \ldots, F_d(y)) \in \mathbb{R}^d$. Now, consider the following family of approximations for $\delta_0$
$$
\frac{1}{|\det \D F(F^{-1}(v_1))|^{k-1}}\prod^{k-1}_{i=1}f_{\ve}(F^{-1}(v_{i+1})-F^{-1}(v_{i})),\ v_1,\ldots,v_k\in\mbR^d,
$$
where 
$$
f_{\ve}(y)=\frac{1}{(2\pi\ve)^{\frac{d}{2}}}\myExp{-\frac{\|y\|^2}{2\ve}},\ \ve>0,\ y\in\mbR^d, 
$$
and
$\D F(y)\coloneq (\partial_jF_i(y))^d_{i, j=1},\ y\in\mbR^d$ denotes the Jacobian matrix of $F$ at $y$.  
Then, the analogous approximations for self-intersection local time of the process $F(u(s)),\ s\in[0,1]$ have the following representation
\begin{align*}
&{}T^{F(u)}_{\ve,k}=\int_{\Delta_k}\frac{1}{|\det \D F(u(t_1))|^{k-1}}\prod^{k-1}_{i=1}f_{\ve}(u(t_{i+1})-u(t_i))\, \differential{t}_1\cdots \differential{t}_k
=T^u_{\ve,k}\left(\frac{1}{|\det \D F|^{k-1}}\right).
\end{align*}
Therefore, the approximations for the self-intersection local times of the process $F(u(s)),\ s\in[0,1]$ are precisely the approximations for the weighted self-intersection local times of the process $u(s),\ s\in[0,1]$ with the weight function 
$$
\rho(z)=\frac{1}{|\det \D F(z)|^{k-1}},\ z\in\mbR^d,
$$
and, if the limit exists for some $p\geqslant 2$, then
$$ 
T^{F(u)}_{k} \coloneq \lplim{\ve\to0} T^{F(u)}_{\ve,k}=\lplim{\ve\to0} T^{u}_{\ve,k}\left(\frac{1}{|\det\D F|^{k-1}}\right).
$$

Thus, to work with the stochastic processes of the form $F(u(s)),\ s\in[0,1],$ where $F$ is diffeomorphism either random or deterministic, we naturally come to the notion of weighted self-intersection local times rigorously defined as follows. 
Again, consider a family of approximations $\{f_{\ve}:\ \ve>0\}$ that weakly converges to  $\delta_0$ as $\ve\to0$,
and define approximations for the weighted self-intersection local time in \eqref{eq:weighted_SILT_formal} of $u$ as follows
\begin{align}
  \label{eq:weighted_SILT_approx}
T_{\ve,k}^{u}(\rho)=\int_{\Delta_k}\rho(u(t_1))\prod^{k-1}_{i=1}f_\ve(u(t_{i+1})-u(t_i))\, \differential{t_1}\cdots \differential{t_k}.
\end{align}
\begin{definition}
\label{def:SILT} The random variable 
$
T^{u}_k(\rho)\coloneq \lplim{\ve\to0} T^{x}_{\ve,k}(\rho)
$
is said to be the weighted $k$-multiple self-intersection local time for $u$ with weight function $\rho$, if the limit exists for some $p\geqslant 2.$ 
\end{definition}

The construction of weighted self-intersection local times for a planar Wiener process was proposed by E. B. Dynkin in \cite{dynkin1988regularized}.  \cite{dorogovtsev2019hilbert} extended 
Dynkin's construction to Hilbert-valued weights.
Self-intersection local times for planar diffusion processes were studied in \cite{izyumtseva2008renormalization}, where it was proved that for a diffusion process $\{y(t) :  t\in[0,1]\}$ in ${\mathbb R}^2$ satisfying the stochastic
differential equation
$$
\begin{cases}
\differential{y}(t)=a(y(t))\differential{s}+\sigma(y(t))\differential{w}(t),\\
y(0)=y_0,
\end{cases}
$$
with a nondegenerate diffusion $\sigma$, the self-intersection local times for $y$ can be constructed using the weight function
$$
\rho(z)=\frac{1}{|\det \sigma(z)|^{k-1}},\ z\in\mathbb {R}^2.
$$
In order to study the geometric characteristics of the process $\{x(u(s),t): s\in[0,1]\}$, we need the occupation measure, which we define now.
\begin{definition}
\label{def:Occupation measure} The random measure $\mu$ on $\cB(\mbR^d)$ defined as
$
\mu(A)=\int^1_01_{A}(u(t))\differential{t},\ A\in\cB(\mbR^d),
$
is said to be the occupation measure of the process $u.$
\end{definition}
For each bounded and measurable function $\phi:\mbR^d\to\mbR$, the following occupation measure formula holds
$$
\int^1_0\phi(u(t))\differential{t}=\int_{\mbR^d}\phi(y)\mu(\differential{y}).
$$
Moreover, for $k \ge 2$ and for a bounded and measurable function $\phi:\mbR^{dk}\to\mbR$, 
\begin{align*}
\int^1_0\cdots\int^1_0\phi(u(t_1),\ldots,u(t_{k}))\, \differential{t_1}\cdots\differential{t_k}
&{}=\int_{\mbR^d}\cdots\int_{\mbR^d}\phi(y_1,\ldots,y_k)\mu(\differential{y_1})\cdots\mu(\differential{y_k}).
\end{align*}
Therefore, if the random variable $T^u_k$ exists, then applying the occupation formula for the approximations of delta function and passing to the limit, one can conclude that
\begin{align}
\label{eq:occupation measure representation}
\int^1_0\cdots\int^1_0\prod^{k-1}_{i=1}\delta_0(u(t_{i+1})-u(t_i))\,\differential{t}_1\cdots\differential{t}_k
&
{}=\int_{\mbR^d}\cdots\int_{\mbR^d}\prod^{k-1}_{i=1}\delta_0(y_{i+1}-y_i)\mu(\differential{y_1})\cdots \mu(\differential{y_k}).
\end{align}
Since one  obtains the self-intersection local times by integrating products of delta functions with respect to the occupation measure, the occupation measure itself can be seen as describing the geometry of the random curve $u$ in the sense of Le Gall.


Let us describe our approach to defining a stochastic flow with the interaction driven by the geometric characteristics of a random curve. Assume that $u$ and $W$ are independent. 
Consider the equation with interaction \eqref{eq:interaction}, where the initial measure $\mu_0$ is the occupation measure of the process $u$. Then, the measure-valued stochastic process $\mu_t$ is precisely the occupation measure of the process $\{x(u(s),t) : s\in[0,1]\},$ for each fixed $t$, and hence, following the previous discussion, describes its geometry. Note that, due to \eqref{eq:occupation measure representation}, we in fact obtain a stochastic differential equation driven by the geometric characteristics of process $\{x(u(s),t):  s\in[0,1]\}$. Such stochastic differential equations driven by geometric characteristics of nonsmooth random curves are a relatively new  model, understudied before.

The equation with interaction is indeed an interacting particle system, where $x(u(s),t)$ is the position  at time $t$ of the particle starting at $u(s)$. Moreover, the motion of the particle at  $u(s)$ of our random curve depends on the motion of all other particles starting at $u(s),\ s\in[0,1]$ through the occupation measure in the coefficients of the stochastic differential equation with interaction. However, it is crucial to note that it is not the standard McKean--Vlasov equation since $\mu_t$ is a random measure describing the distribution of mass of particles at time $t$, and not the probability law of random diffeomorphism $x(\cdot, t): \mbR^d \to \mbR^d$. We refer the interested readers to \cite{Sznitman1991Chaos,Chaintron2022reviewI,Chaintron2022reviewII} for a review of the important topics of the McKean--Vlasov equations, and propagation of chaos.

\subsection{Our contributions}
Our primary contributions in this paper are summarised as follows:
\begin{enumerate}
  \item \Cref{thm:SILT_V_flow} proves the existence of self-intersection local times for the process $\{x(u(s),t): s\in[0,1]\}$, where $\{u(s):  s\in[0,1]\}$ is a Volterra Gaussian process (see Definition \ref{def:Volterra}) and $x$ is the random diffeomorphism satisfying equation \eqref{eq:interaction} with the initial measure $\mu_0$ being the occupation measure of the process $u.$ \Cref{thm:SILT_V_flow} is proved by means of a crucial discretisation procedure (\Cref{thm:SILT_V_flow_n}) applied to the initial occupation measure $\mu_0$ of the process $u$, a technical lemma on the determinant of the Jacobian matrix (\Cref{thm:determinant}) and an elegant ``change of variable'' formula (\Cref{thm:Weighted_SILT_V}), which allows us to describe the self-intersection local times for the process $\{x(u(s),t): s\in[0,1]\}$, in terms of the self-intersection local times of the process $u$, and an appropriate (random) weight function. 
  \item As a by-product of our analysis, we also obtain conditions on unbounded weight functions and general Gaussian processes that guarantee the existence of multiple weighted self-intersection local times (\Cref{thm:Weighted_SILT}), which is of independent interest. To the best of our knowledge, such existence results have been established only for bounded weights so far. 
  
  \item \Cref{thm:asymptotics_expectations} describes the asymptotics of self-intersection local times $\Eof{T_k^{x(u, t)}}$ for large $t$ when $x$ solves a special (sub-)class of the equation with interaction. 
  \item Drawing parallel to the deterministic literature, we establish certain stochastic exponential martingales associated with the self-intersection local times. This is inspired by a similar question related to the asymptotics of the length of deterministic smooth curves under the action of isotropic Brownian flows. If $L_t \coloneq \int^1_0\norm{\gamma^{\prime}_t(u)}\differential{u}$ is the length of the curve $\gamma_t \coloneq \phi_t\circ\gamma:[0,1]\to\mbR^d$, then it is known that the stochastic process 
$\{\exp\left(-\left(\lambda+\frac{\beta_L}{2}\right)t\right)L_t : t\ge 0\}$ is a martingale, which converges almost surely, where 
$\phi_t$ is an isotropic Brownian flow, $\lambda$ is the top Lyapunov exponent associated to the flow,  and $\beta_L$ is the characteristic constant of the flow. See \cite{dimitroff2006some} for details and precise definitions. For non-smooth random curves (such as the Volterra Gaussian process) in stochastic flows with interaction, the self-intersection local times  $T^{x(u,t)}_k$ are the geometric characteristics. In \Cref{thm:silt martingale},  we prove that $\{\exp\left((k-1)\hat{a}t-\frac{k(k-1)\hat{b}t}{2}\right)T^{x(u,t)}_k : t\ge 0\}$ is a positive, continuous square-integrable martingale for an appropriate choice of $\hat{a}$, and $\hat{b}$. We provide an explicit expression for its quadratic variation.
\end{enumerate}

We focus on  the Volterra Gaussian processes because they constitute a rich class of Gaussian processes including the Wiener process, the Brownian bridge, the fractional Brownian motion. 
On the technical side, the representation of these Gaussian processes via stochastic integrals of deterministic kernels allows us to describe the properties of the processes via the properties of kernels, which are often easier to state and verify. Note that Volterra Gaussian processes are, in general, neither Markovian nor martingales.

\subsection{Structure of the paper}
The rest of the paper is structured as follows. In \Cref{sec:SILT}, we first introduce sufficient conditions on unbounded weight functions and general Gaussian processes that guarantee the existence of multiple weighted self-intersection local times. Then, we specialise to Volterra Gaussian processes and provide sufficient conditions for the existence of multiple weighted self-intersection local times. In \Cref{sec:VGSF}, we prove the existence of multiple self-intersection local times $T^{x(u,t)}_k$ for the process $\{x(u(s),t): s\in[0,1]\}$ and 
establish the ``change of variable'' formula
$$
T^{x(u,t)}_k=T^u_k\left(\frac{1}{|\det\D x(\cdot,t)|^{k-1}}\right).
$$
In \Cref{sec:Asymptotics SILT},  we describe the asymptotics of random variable $T^{x(u,t)}_k$ for large $t$, and study properties of a related stochastic exponential martingale. 


\subsection{Notational conventions}
Let $(\Omega, \cF, \prob)$ be a probability space, large enough to carry all random elements considered in this paper. 
For $k\geqslant 2$ and $t\in [0,1]$, let
$
\Delta_k(t)\coloneq \{t_1,\ldots,t_k\in[0,1]:\ 0\leqslant t_1\ldots\leqslant t_k\leqslant t\}
$
and $\Delta_k \equiv \Delta_k(1).$ 
Let $\A=(a_{ij})^n_{i, j=1},\ \B=(b_{ij})^n_{i, j=1}$ be two matrices of dimension $n\times n.$
By $\A\odot \B=(a_{ij}b_{ij})^n_{ij=1}$, we denote the Hadamard product of matrices $\A$ and $\B.$ The Hilbert--Schmidt norm of the matrix $A$ is defined by
 $$
 \norm{A}_{HS}=\sqrt{\trace(A^{*}A)},
 $$
 where $A^{*}$ is the matrix adjoint to $A.$
Given a Hilbert space $\mbH$ and elements 
$e_1,\ldots,e_n\in \mbH$, we denote by $G(e_1,\ldots,e_n)$ the Gram determinant constructed by the elements $e_1,\ldots,e_n.$ Note that for Gaussian random variables $\xi_1,\ldots,\xi_n,$
the following relation holds
$$
\det\Cov(\xi_1,\ldots,\xi_n)=G(\xi_1,\ldots,\xi_n).
$$
We use the notation $\Var(\xi_i\mid \xi_1,\ldots,\xi_{i-1})$ for the conditional variance $\xi_i$ given $\xi_1,\ldots,\xi_{i-1}.$ Note that
$$
\det\Cov(\xi_1,\ldots,\xi_n)=\Var(\xi_1)\Var(\xi_2\mid\xi_1)\cdots \Var(\xi_n\mid\ \xi_1,\ldots,\xi_{n-1}).
$$
We use the notation $\cM$ for the set of all probability measures on $\cB(\mbR^d)$. Let us denote by $C(\mu,\nu)$ the set of all probabilities measures on $\cB(\mbR^{d}\times\mbR^d)$ with marginal projections $\mu$ and $\nu.$ We use the following notations for the set of all probability measures having finite moments of order $n,$
$$
\mathcal{M}_n=\left\{\mu\in\mathcal{M}:\  \int_{\mbR^d}\|y-z\|^n\mu(\differential{z})<\infty,\ \text{for all}\  y\in\mbR^d\right\},
$$
and for the Wasserstein distance of order $n,$ 
$$
\gamma_n(\mu,\nu)\coloneq \left(\inf_{\kappa\in C(\mu,\nu)}\int_{\mbR^d}\int_{\mbR^d}\|y-z\|^n\kappa(\differential{y},\differential{z})\right)^{\frac{1}{n}}.
$$

We will use notations $\lambda_d$ for the Lebesgue measure on $\mbR^d.$ 
For a differentiable function $\Psi:\mbR^d\to\mbR^d$ we denote the Jacobian matrix by
$\D\Psi(y)=(\partial_j\Psi_i(y))^d_{i,j=1}$, where $\Psi_i$ is the $i$-th coordinate of $\Psi.$




\section{Weighted self-intersection local times}
\label{sec:SILT}

\subsection{General $\mbR^d$-valued Gaussian processes}
\label{sec:weighted_GP}
  Let us begin by formulating a sufficient condition for the existence of $k$-multiple weighted self-intersection local times for a general $\mbR^d$-valued Gaussian process $\{u(t) \equiv (u_1(t), \ldots, u_d(t)) :  t\in[0,1]\}$. 
For the Gaussian process $u$, we assume the weight function $\rho: \mbR^d \to \mbR$ satisfies the following condition: 
\begin{align}
  \label{eq:condition_on_weight_function}  \lvert{\rho(y)}\rvert\leqslant \alpha e^{\beta\|y\|^2} \quad \text{for all } y \in \mbR^d \text{and } \alpha > 0, 0\leqslant\beta<\frac{1}{8\sup_{t\in[0,1]}\Var(u_1(t))}.
\end{align}
Now, we consider the family of approximations $\{T^u_{\varepsilon,k}(\rho) : \ve >0 \}$ for the weighted $k$-multiple self-intersection local time of the process $u$ defined in \eqref{eq:weighted_SILT_approx}. 
The following theorem, which is interesting on its own, provides a sufficient condition for the existence weighted self-intersection local times for the centred Gaussian process $u$.



\begin{theorem}
  \label{thm:Weighted_SILT} Let $\{u(t) \equiv (u_1(t),\ldots,u_d(t)):  t\in[0,1]\}$ be a centred Gaussian process in $\mathbb{R}^d$ with indepdendent and identically distributed (i.i.d.) components and $\rho:\mathbb{R}^d\to\mathbb{R}$ be a weight function satisfying \eqref{eq:condition_on_weight_function}. Let $p\ge 2$.  If
 \begin{align}
   \label{eq:Gram}
 \int_{\Delta^p_k}\frac{1}{G(u_1(t_1),\ldots,u_1(t_{pk}))^{\frac{d}{2}}}\differential{t_1}\cdots\differential{t_{pk}}<\infty,
\end{align}
then there exists a random variable $T^{u}_k(\rho)$ such that 
the family  
$\{T^u_{\varepsilon,k}(\rho) : \ve >0 \}$ converges to $T^{u}_k(\rho)$ in $L^p(\Omega,\cF,\prob)$ as $\ve\to0$. 
\end{theorem}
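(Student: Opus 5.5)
We will show that $\{T^u_{\ve,k}(\rho):\ve>0\}$ is Cauchy in $L^p(\Omega,\cF,\prob)$. Since the statement allows any approximating family, we first work with the Gaussian family $f_\ve(y)=(2\pi\ve)^{-d/2}\exp(-\|y\|^2/(2\ve))$. It suffices to take $p$ an even integer; otherwise we replace $p$ by a larger even integer, and the argument below only requires \eqref{eq:Gram} for $pk$ points. We expect this to be the natural setting, since \eqref{eq:Gram} is stated with $pk$ time variables.

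\textbf{Moments via Fourier transforms.} For $p$ even and $\ve_1,\ldots,\ve_p>0$ we write $\E\prod_{j=1}^p T^u_{\ve_j,k}(\rho)$ as an integral over $\Delta_k^p$. In this integral we use $f_\ve(y)=(2\pi)^{-d}\int e^{i\langle\lambda,y\rangle-\ve\|\lambda\|^2/2}\differential\lambda$. The weight $\rho(u(t_1^j))$ cannot be Fourier-transformed, since it is only bounded by $\alpha e^{\beta\|y\|^2}$. We therefore keep it and take the expectation of
\[
\prod_j\rho(u(t^j_1))\exp\Bigl(i\sum_{j,l}\langle\lambda_l^j,u(t^j_{l+1})-u(t^j_l)\rangle\Bigr).
\]
Because the components of $u$ are i.i.d., everything factorises over the $d$ coordinates. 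After bounding $|\rho|$, we must control Gaussian integrals of the form
\[
\E\Bigl[e^{\beta\sum_j\|u(t_1^j)\|^2}\,e^{i\langle\Lambda,U\rangle}\Bigr],
\]
where $U=(u_1(t^j_l))$ is the $pk$-dimensional Gaussian vector for one coordinate. This expectation is an explicit Gaussian integral with covariance $\Sigma$ perturbed by the rank-$p$ term from $\beta$. We will handle it by Hölder's inequality, pulling out $\bigl(\E e^{2\beta p\|u(t)\|^2}\bigr)^{1/2}$-type factors, or more directly by using the density of $U$.

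\textbf{Density approach and main estimate.} Integrating out $\lambda$ first is cleaner. The product of $f_\ve$'s convolved with the Gaussian density $p_\Sigma$ of $(u(t^j_l))_{j,l}$ yields the density of $U+\sqrt{\ve}Z$ evaluated on the diagonal sets. Concretely, $\E\prod_j T_{\ve_j,k}(\rho)$ equals the integral over $\Delta_k^p$ of
\[
\int_{(\mbR^d)^p}\prod_j\rho(y_j)\,q_{\ve}(y_1,\ldots,y_p)\,\differential y,
\]
where $q_\ve$ is the density of $(u(t_1^j),\,u(t^j_{l+1})-u(t^j_l)+\text{noise})$ evaluated at increments zero. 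In each coordinate this density is bounded by a constant times $G(\text{perturbed})^{-1/2}$. By monotonicity of Gram determinants under adding independent noise, this is at most $G(u_1(t_1),\ldots,u_1(t_{pk}))^{-1/2}$ up to the Jacobian of the linear change to increments, which equals $1$. The Gaussian in $y$ has conditional variance at most $\sup_t\Var(u_1(t))$. The condition $\beta<1/(8\sup\Var)$ guarantees that $e^{\beta\|y\|^2}$, and after squaring differences even $e^{2\beta\|y\|^2}$ for the second-moment-type terms, is integrable against it uniformly in the times. The factor $8$ presumably absorbs the crude Cauchy--Schwarz/Hölder steps, and we will aim for that constant. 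This gives a dominating function $C\,G(u_1(t_1),\ldots,u_1(t_{pk}))^{-d/2}$, which is integrable by \eqref{eq:Gram} and is independent of the $\ve_j$.

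\textbf{Convergence and independence of the family.} For fixed times with $G>0$, the integrand converges as $\ve_j\to0$ to the corresponding value at $\ve=0$, since the densities are continuous in the covariance. Dominated convergence then shows that $\E\prod_j T_{\ve_j,k}(\rho)$ has a limit independent of how $(\ve_j)\to0$. Expanding $\E(T_{\ve,k}(\rho)-T_{\ve',k}(\rho))^p$ binomially gives $0$ in the limit, so the family is Cauchy in $L^p$. For a general approximating family $f_\ve$ we compare it with the Gaussian one by the same mixed-moment argument, with $f_\ve$ in place of the heat kernel. The main obstacle is the uniform-in-time Gaussian bound with the unbounded weight: showing that $e^{\beta\sum\|y_j\|^2}$ times the conditional Gaussian density integrates to a quantity bounded by $C\,G^{-d/2}$ uniformly. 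This is where the precise constant $1/(8\sup\Var)$ enters, and we would first try Hölder with exponents $2$ to separate the weight from the delta part.
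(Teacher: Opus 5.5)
Your skeleton is essentially the paper's, which carries it out only for $p=2$ via Gram--Schmidt: mixed moments $\E\prod_j T_{\ve_j,k}(\rho)$, a dominating function $C\,G(u_1(t_1),\ldots,u_1(t_{pk}))^{-d/2}$ from Gaussian densities and monotonicity of Gram determinants, dominated convergence, and binomial expansion. The gap is exactly the step you call the main obstacle, and the justification you give for it is wrong.

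Write the time-integrand as $\varphi_{I^\ve}(0)\,\E\bigl[\prod_j\rho(Y_j)\,\big|\,I^\ve=0\bigr]$. Here $Y=(u(t^j_1))_{j\leqslant p}$, $I^\ve$ is the vector of within-block increments plus the independent noise, and $\varphi_{I^\ve}$ is its density. A bound on each conditional variance by $v^{*}=\sup_t\Var(u_1(t))$ is not what matters. The relevant quantity is
$\E\bigl[e^{\beta\sum_j\|Y_j\|^2}\,\big|\,I^\ve=0\bigr]$, which equals $\det(I_p-2\beta\Sigma)^{-d/2}$ if $2\beta\lambda_{\max}(\Sigma)<1$ and is $+\infty$ otherwise. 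Here $\Sigma$ is the $p\times p$ conditional covariance of $(u_1(t^1_1),\ldots,u_1(t^p_1))$.

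In general you only have $\lambda_{\max}(\Sigma)\leqslant\trace\,\Sigma\leqslant p v^{*}$. This is essentially attained when the block starting times $t^j_1$ coalesce, so $\beta<1/(8v^{*})$ gives a uniform bound only for $p\leqslant4$. For $p\geqslant5$ the estimate is false, not merely unproved:
\begin{enumerate}[label=(\roman*)]
\item Take $d=1$ Brownian motion, so $v^{*}=1$ and \eqref{eq:Gram} holds for every $p$.
\item Take $\rho(y)=e^{\beta y^2}$ with $\frac{1}{2p}<\beta<\frac18$, and times with $\min_j t^j_1>r>\frac{1}{2p\beta}$.
\item Then $w(r)$ is independent of $I^\ve$ and of the $w(t^j_1)-w(r)$. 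Hence $\Sigma\succeq r\,\mathbf 1\mathbf 1^{\top}$ and $2\beta\lambda_{\max}(\Sigma)\geqslant 2\beta pr>1$.
\item The integrand is therefore $+\infty$ on a set of positive measure, and $\E\,T_{\ve,k}(\rho)^p=\infty$ for every $\ve>0$.
\end{enumerate}
So the family is not even in $L^p$ there, and no argument can deliver the claim. The paper only carries out $p=2$, where its bounds need just $6\beta v^{*}<1$.

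The tool you propose would not close the gap even for small $p$. Cauchy--Schwarz separating $\rho$ from the approximate delta functions produces $\E\prod f_\ve(\cdot)^2$. Since $f_\ve^2=(4\pi\ve)^{-d/2}f_{\ve/2}$, that bound blows up as $\ve\to0$.

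Instead, condition on the weight variables. Working in one coordinate, $\det\Cov(Y,I^\ve)=\det\Cov(I^\ve)\,\det\Cov(Y\mid I^\ve)$. Hadamard's inequality gives $\det\Cov(Y\mid I^\ve)\leqslant (v^{*})^p$, and $\det\Cov(Y,I^\ve)\geqslant\det\Cov(Y,I^0)=G(u_1(t_1),\ldots,u_1(t_{pk}))$. Together these give $\varphi_{I^\ve}(0)\leqslant c\,G^{-d/2}$ uniformly in $\ve$. Integrating $\prod_j|\rho(y_j)|$ against the conditional law $N(0,\Sigma)$ then gives at most $\alpha^p(1-2p\beta v^{*})^{-pd/2}$. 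This mirrors the paper's step of integrating out $y_2,\ldots,y_{2k}$ uniformly in $(y_1,y_{k+1})$ before integrating $\rho(y_1)\rho(y_{k+1}+a(t_{k+1},t_1)y_1)$ against $\tilde p_{t_1}\tilde p_{t_{k+1}}$. It closes your argument for $p\leqslant4$.

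Finally, your reduction for odd $p$ runs the wrong way. Condition \eqref{eq:Gram} for $pk$ points implies it for fewer points, not more. For odd $p$, split $\rho=\rho^+-\rho^-$ and use $|a-b|^p\leqslant(a-b)^{p-1}(a+b)$ for $a,b\geqslant0$; this again reduces everything to mixed moments of order $p$.
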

%
\begin{proof}[Proof of Theorem~\ref{thm:Weighted_SILT}]
  We will prove the theorem for $p=2$. The proof for $p> 2$ follows in a similar manner. To check that the family of random variables $\{T^{u}_{\varepsilon, k}(\rho):\ \ve>0\}$ is a Cauchy family in $L^2(\Omega,\cF,P)$, it suffices to check that 
\begin{align*}
\lim_{\varepsilon_1,\varepsilon_2\to0}\Eof{T^{u}_{\varepsilon_1, k}(\rho)T^{u}_{\varepsilon_2, k}(\rho)}<\infty.
  \end{align*}
  Assume that $\varepsilon_1=\varepsilon_2.$ Then,
  \begin{align}
  \label{eq:product} 
   \Eof{T^{u}_{\varepsilon, k}(\rho)^2}=\int_{\Delta^2_k}\Eof{\rho(u(t_1))\rho(u(t_{k+1}))\prod^{2k-1}_{i=1,\ i\neq k}f_{\varepsilon}(u(t_{i+1})-u(t_i))}\differential{t_1}\ldots \differential{t_{2k}}.
\end{align}
Let 
$
\tilde{u}(t_1),\ \tilde{u}(t_{k+1}),\ \tilde{u}(t_2),\ldots,\ \tilde{ u}(t_{2k})
$
 be the orthogonal system of elements  obtained from 
 $
 u(t_1),\ u(t_{k+1}),\ u(t_2),\ldots,\ u(t_{2k})
 $
 via the Gram-Schmidt orthogonalisation procedure in $L^2(\Omega,\cF,\prob)$.
 Let $a(t_i,t_j)=\frac{\Eof{\tilde{u}_1(t_i)\tilde{u}_1(t_j)}}{\Eof{\tilde{u}_1(t_j)^2}}$ and 
\begin{align*}
&{}
Q^1_{t_1 t_{k+1} t_2\ldots t_{i+1}}(y_1,y_{k+1}, y_2,\ldots,y_i)\\
&
= (a(t_{i},t_{1})-a(t_{i+1},t_1))y_1+(a(t_{i},t_{k+1})-a(t_{i+1},t_{k+1}))y_{k+1}
+\cdots+(a(t_{i},t_{i-1})-a(t_{i+1},t_{i-1}))y_{i-1}\\
&{}\quad 
+(1-a(t_{i+1},t_{i}))y_i
\end{align*}
 for any $y_1,y_{k+1},y_2,\ldots, y_i\in \mathbb{R}.$
 Let $Q_{t_1t_{k+1} t_2\ldots t_{i+1}}(y_1,y_{k+1},y_2,\ldots,y_i)$ be the vector in $\mathbb{R}^d$ with the coordinates 
 $$
 Q^1_{t_1 t_{k+1} t_2\ldots t_{i+1}}(y^1_1,y^1_{k+1},y^1_2,\ldots,y^1_i),\ldots, Q^1_{t_1 t_{k+1} t_2\ldots t_{i+1}}(y^d_1,y^d_{k+1},y^d_2,\ldots,y^d_i),
 $$ 
 where we assume that each $y_j=(y^1_j,\ldots,y^d_j)\in\mathbb{R}^d$ and   
 $
 Q_{t_1t_{k+1} t_2\ldots t_{i+1}}(y_1,y_{k+1},y_2,\ldots,y_i)=Q_{t_1t_{k+1} t_2}(y_1,y_{k+1}),
 $ for $i=1$.
 Then, the expectation in \eqref{eq:product} has the following representation
  \begin{align*}
&{}
\int_{\Delta^2_k}\Eof{\rho(\tilde{u}(t_1))\rho(\tilde{u}(t_{k+1})+a(t_{k+1},t_1)\tilde{u}(t_1))
\prod^{2k-1}_{i=1,\ i\neq k}f_{\varepsilon}(\tilde{u}(t_{i+1})-Q_{t_1 t_{k+1} t_2\ldots t_{i+1}}(\tilde{u}(t_1),\tilde{u}(t_{k+1}),\ldots,\tilde{u}(t_i)))}\differential{t_1}\cdots\differential{t_{2k}}\\
&{}=\int_{\Delta^2_k}\int_{\mathbb{R}^{2d}}\rho(y_1)\rho(y_{k+1}+a(t_{k+1},t_1)y_1)\\
&{}\quad \quad \times \int_{\mathbb{R}^{2kd-2d}}\prod^{2k-1}_{i=1,\ i\neq k}f_{\varepsilon}(y_{i+1}-Q_{t_1,t_{k+1},t_2\ldots t_{i+1}}(y_1,y_{k+1},\ldots,y_i))
\prod^{2k}_{i=1}\tilde{p}_{t_i}(y_i)\differential{y_1}\cdots \differential{y_{2k}}\differential{t_1}\cdots\differential{t_{2k}},
    \end{align*}
where $\tilde{p}_{t_i}$ is the density of the Gaussian vector $\tilde{x}(t_i),\ i=1,\ldots, 2k.$
Note that
  \begin{align*}
\int_{\mathbb{R}^{2kd-2d}}\prod^{2k-1}_{i=1,\ i\neq k}f_{\varepsilon}(y_{i+1}-Q_{t_1,t_{k+1},t_2\ldots t_{i+1}}(y_1,y_{k+1},\ldots,y_i))
\prod^{2k}_{i=2,\ i\neq k+1}\tilde{p}_{t_i}(y_i)\differential{y_2}\cdots \differential{y_{2k}} \to \tilde{p}_{t_1\ldots t_{2k}}(y_1,y_{k+1}),
    \end{align*}
as $\ve\to0,$ where
\begin{align*}
\tilde{p}_{t_1\ldots t_{2k}}(y_1,y_{k+1}) &{}=\tilde{p}_{t_2}(Q_{t_1 t_{k+1} t_2}(y_1,y_{k+1}))\tilde{p}_{t_3}(Q_{t_1 t_{k+1} t_2 t_3}(y_1,y_{k+1},Q_{t_1 t_{k+1} t_2}(y_1,y_{k+1}))
\tilde{p}_{t_{2k}}(Q_{t_1,t_{k+1},t_2\ldots t_{2k}}(y_1,y_{k+1},\ldots)).
\end{align*}
To apply Lebesgue's dominated convergence theorem, let us check that there exists a function 
 $g_{t_1,\dots,t_{2k}}:\ \mathbb{R}^{2d}\to\mathbb{R}_{+}$ such that for any $\ve>0$,
\begin{align*}
&{}\lvert\rho(y_1)\rho(y_{k+1}+a(t_{k+1},t_1)y_1)
\int_{\mathbb{R}^{2kd-2d}}\prod^{2k-1}_{i=1,\ i\neq k}f_{\varepsilon}(y_{i+1}-Q_{t_1,t_{k+1},t_2\ldots t_{i+1}}(y_1,y_{k+1},\ldots,y_i)) 
\prod^{2k}_{i=1}\tilde{p}_{t_i}(y_i)\differential{y_1}\cdots \differential{y_{2k}}\rvert
\\
&{}\quad \leqslant g_{t_1,\ldots,t_{2k}}(y_1,y_{k+1}), 
\end{align*}
for all $y_1, y_{k+1}$,    and
   $
\int_{\Delta^2_k}\int_{\mathbb{R}^{2d}}g_{t_1\ldots t_{2k}}(y_1,y_{k+1})\differential{y_{1}}\differential{y_{k+1}}\differential{t_1}\cdots\differential{t_{2k}}<\infty.
         $
Let us first integrate with respect to $y_{2k}.$ Applying Plancherel's theorem, one can see that
 \begin{align*}
 \int_{\mathbb{R}^{d}}f_{\varepsilon}(y_{2k}-Q_{t_1\ldots t_{2k}}(y_1,y_{k+1},\ldots,y_{2k-1}))\tilde{p}_{t_{2k}}(y_{2k})\differential{y_{2k}}
&{}
=\int_{\mathbb{R}^{d}}e^{i(\theta, Q_{t_1\ldots t_{2k}}(y_1,y_{k+1},\ldots,y_{2k-1}))-\frac{\ve \|\theta\|^2}{2}}e^{-\frac{\Eof{\tilde{u}_1(t_{2k})^2}\|\theta\|^2}{2}}\differential{\theta}
\\
&{}\quad 
\leqslant\frac{c}{\left(\Eof{\tilde{u}_1(t_{2k})^2}\right)^{\frac{d}{2}}}, \text{ for some } c>0.
  \end{align*}
  Repeating the same arguments $(2k-1)$ times, one can conclude that
  \begin{align*}
&{} \int_{\mathbb{R}^{2kd-2d}}\prod^{2k-1}_{i=1,\ i\neq k}f_{\varepsilon}(y_{i+1}-Q_{t_1\ldots t_{i+1}}(y_1,y_{k+1},\ldots,y_i)
\prod^{2k}_{i=2,i\neq k+1}\tilde{p}_{t_i}(y_i)\differential{y_2}\cdots \differential{y_{2k}}\leqslant \frac{c}{\prod^{2k}_{i=2,\ i\neq k+1}\left(\Eof{\tilde{u}_1(t_{i})^2}\right)^{\frac{d}{2}}}.
 \end{align*}  
 Now, applying \eqref{eq:condition_on_weight_function}, one can conclude that
  \begin{align*}
&{}\int_{\mathbb{R}^{2d}}\rho(y_1)\rho(y_{k+1}+a(t_{k+1},t_1)y_1)\tilde{p}_{t_1}(y_1)\tilde{p}_{t_{k+1}}(y_{k+1})\differential{y_1}\differential{y_{k+1}}
\\
&{}
\leqslant \alpha^2\int_{\mathbb{R}^{2d}}\myExp{(\beta+2\beta (a(t_{k+1},t_1))^2)\|y_1\|^2}\myExp{2\beta\|y_{k+1}\|^2}\tilde{p}_{t_1}(y_1)\tilde{p}_{t_{k+1}}(y_{k+1})\differential{y_1}\differential{y_{k+1}}.
   \end{align*}  
Furthermore, it follows from the condition on $\beta$ in \eqref{eq:condition_on_weight_function} that
   \begin{align*}
  \int_{\mathbb{R}^{d}}e^{2\beta\|y_{k+1}\|^2}\frac{1}{\left(2\pi\Eof{\tilde{u}_1(t_{k+1})^2}\right)^{\frac{d}{2}}}\myExp{-\frac{\|y_{k+1}\|^2}{2\Eof{\tilde{u}_1(t_{k+1})^2}}}\differential{y_{k+1}}=\frac{1}{\left(1-4\beta \Eof{\tilde{u}_1(t_{k+1})^2}\right)}<c_1, 
   \end{align*}   
   and
 \begin{align*}
 \int_{\mathbb{R}^{d}}e^{(\beta+2\beta (a(t_{k+1},t_1)^2)\|y_1\|^2}\frac{1}{\left(2\pi\Eof{\tilde{u}_1(t_{1})^2}\right)^{\frac{d}{2}}}e^{-\frac{\|y_{1}\|^2}{2\Eof{\tilde{u}_1(t_{1})^2}}}\differential{y_{1}}
=\frac{1}{1-2\beta\Eof{\tilde{u}_1(t_{1})^2}(1+2(a(t_{k+1},t_1))^2)}<c_2,
   \end{align*} 
   where $c_1,\ c_2$ are some positive constants.
   Note that
   \begin{align*} 
&{}\int_{\Delta^2_{k}}\frac{1}{\prod^{2k}_{i=2,\ i\neq k+1}\left(\Eof{\tilde{u}_1(t_{i})^2}\right)^{\frac{d}{2}}}\differential{t_1}\cdots\differential{t_k}
\leqslant c\int_{\Delta^2_k}\frac{1}{G(u_1(t_1),\ldots,u_1(t_{2k})^{\frac{d}{2}}}\differential{t_1}\cdots\differential{t_{2k}}<\infty
\end{align*}
by assumption \eqref{eq:Gram}, which completes the proof of the theorem since the case $\ve_1\neq\ve_2$ can be done similarly using the identity 
$f_{\ve_2}(z)=\left(\frac{\ve_1}{\ve_2}\right)^{\frac{d}{2}}f_{\ve_1}\left(\sqrt{\frac{\ve_1}{\ve_2}} z\right). $
\end{proof}


\subsection{Volterra Gaussian processes}
\label{sec:weighted_VGP}
Let  $\{w(t) : t\in [0,1]\}$, be a one-dimensional Wiener process.  
\begin{definition}
\label{def:Volterra} A centred Gaussian process $\{u(t) : t\in[0,1]\}$ is called a Volterra Gaussian process, if  it admits the representation
$
u(t) = 
\int^t_0k(t,s)\differential{w(s)},
$ for each $t\in(0,1]$, 
where $k\in L^2([0,1]^2)$ is a Volterra kernel, i.e., 
$k(t,s)=0$  for all $s>t$, and 
$
\sup_{t\in[0,1]}\int^t_0k(t,s)^2 \differential{s}<\infty. 
$
\end{definition}
The concept of local nondeterminism was introduced in \cite{berman1973local} as a sufficient condition that guarantees the existence of jointly continuous local times for $\mbR$-valued Gaussian processes. More precisely, if $\{u(t) : t\in[0,1]\}$ is a Gaussian process such that for any $m\geqslant 2$ and any $t_1<\ldots<t_m$, we have 
$$
\lim_{c\downarrow 0}\inf_{t_m-t_1\leqslant c}\frac{\Var(u(t_m)-u(t_{m-1})\mid u(t_1),\ldots,u(t_{m-1})}{\Var(u(t_m)-u(t_{m-1})}>0,
$$
then 
the process $u$ is said to be a locally nondeterministic. If the kernel $k$ of a Volterra Gaussian process $u$ satisfies 
$$
 \lim_{c\downarrow 0}\inf_{0<t-s\leqslant c}\frac{\int^t_sk^2(t,r)\differential{r}}{\int^s_0(k(t,r)-k(s,r))^2 \differential{r}}>0,
$$
  then $u$ is locally nondereministic.
 The following extension of the local nondeterminism condition for Volterra Gaussian processes was introduced in \cite{harang2022regularity}.
 \begin{definition}
 \label{def:LND}
  If the kernel $k$ of a Volterra Gaussian process $u$ satisfies 
   \begin{align}
\label{eq:LND} 
 \inf_{t\in[0,1]}\inf_{s\in[0,t]}\frac{1}{(t-s)^{\zeta}}\int^t_sk^2(t,r)\differential{r}>0
  \end{align}
  for some $\zeta>0,$ then the process $u$ is said to be $(2,\zeta)$-locally nondeterministic.
  \end{definition} 

We refer the readers to \cite{Izyumtseva2026VGP} for an extensive discussion on self-intersection local times of Volterra Gaussian processes. 
Let us now describe the class of Volterra Gaussian processes for which an analogue of \Cref{thm:Weighted_SILT} holds.
Let $w_1,\ldots, w_d$ be independent one-dimensional Wiener processes.
Consider the $\mbR^d$-valued Volterra Gaussian process 
$$
u(t)=(u_1(t),\ldots, u_d(t)),\ t\in [0,1],
$$ 
where  the coordinate $u_i$ is an $\mbR$-valued Volterra Gaussian process generated by the kernel $k$ and the Wiener process $w_i$ for  each $i=1,\ldots,d$. 
Assume that there is a constant $L>0$ such that for any $t,s,r\in[0,1],$
  \begin{align}
\label{eq:Lip} 
|k(t,r)-k(s,r)|\leqslant L|t-s|.
 \end{align}
Using assumption \eqref{eq:Lip}, one can see that 
 $ \Eof{\|u(t)-u(s)\|^2}\leqslant c|t-s|$ for any $s,\ t\in[0,1]$, and, in general, for any $p\geqslant 2$
 $ \Eof{\|u(t)-u(s)\|^{2p}}\leqslant c_1|t-s|^{p},$
  where $c,\ c_1$ are positive constants. Hence, by Kolmogorov's continuity criterion, $u$ has a continuous modification. Henceforth, we will work with this continuous modification. The next theorem describes conditions on weight-functions and $\zeta$ that guarantee the existence of multiple weighted self-intersection local times for Volterra Gaussian processes.

    \begin{theorem}
\label{thm:Weighted_SILT_V} Assume that a Volterra Gaussian process $\{ u(t): t\in[0,1]\}$ in $\mathbb{R}^d$ is $(2,\zeta)$-locally nondeterministic for $\zeta\in (0,\frac{2}{d})$ and
the weight function $\rho:\mathbb{R}^d\to\mathbb{R}$  satisfies  \eqref{eq:condition_on_weight_function}. 
Then, there exists a random variable $T^{u}_k(\rho)$ such that 
the family  
$\{T^u_{\varepsilon,k}(\rho) : \ve >0 \}$ converges to $T^{u}_k(\rho)$ in $L^p(\Omega,\cF,\prob)$ as $\ve\to0$, for any $p\ge 2.$
\end{theorem}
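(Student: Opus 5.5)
The plan is to reduce the statement to \Cref{thm:Weighted_SILT}. The components $u_1,\ldots,u_d$ are i.i.d. centred Gaussian, being driven by the same kernel $k$ and independent Wiener processes $w_i$. By \Cref{def:Volterra}, $\sup_{t}\Var(u_1(t))=\sup_t\int_0^tk(t,s)^2\differential{s}<\infty$, so condition \eqref{eq:condition_on_weight_function} on $\rho$ is meaningful. It therefore suffices to verify the Gram condition \eqref{eq:Gram} for every $p\ge 2$:
\begin{align*}
\int_{\Delta^p_k}G(u_1(t_1),\ldots,u_1(t_{pk}))^{-\frac{d}{2}}\differential{t_1}\cdots\differential{t_{pk}}<\infty .
\end{align*}

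First, reorder the $n=pk$ times. The integrand is symmetric in its arguments, since the Gram determinant does not depend on the order of the elements. Hence $\Delta_k^p\subset[0,1]^{pk}$ can be covered by finitely many simplices $\{s_1\le\cdots\le s_n\}$, at the cost of a combinatorial constant. On such a simplex, write
\begin{align*}
G(u_1(s_1),\ldots,u_1(s_n))=\Var(u_1(s_1))\prod_{i=2}^n\Var\bigl(u_1(s_i)\mid u_1(s_1),\ldots,u_1(s_{i-1})\bigr).
\end{align*}
The key observation is the Volterra structure. The variables $u_1(s_1),\ldots,u_1(s_{i-1})$ are measurable with respect to $\sigma(w_1(r):r\le s_{i-1})$, and $u_1(s_i)=\int_0^{s_{i-1}}k(s_i,r)\differential{w_1(r)}+\int_{s_{i-1}}^{s_i}k(s_i,r)\differential{w_1(r)}$, where the second term is independent of that $\sigma$-algebra. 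Conditioning on less information gives larger variance, so
\begin{align*}
\Var\bigl(u_1(s_i)\mid u_1(s_1),\ldots,u_1(s_{i-1})\bigr)\ge \int_{s_{i-1}}^{s_i}k(s_i,r)^2\differential{r}\ge c\,(s_i-s_{i-1})^{\zeta},
\end{align*}
by \eqref{eq:LND}. Likewise $\Var(u_1(s_1))\ge c\,s_1^{\zeta}$, taking $s=0$ in \eqref{eq:LND}.

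It follows that $G^{-d/2}\le C\,s_1^{-\zeta d/2}\prod_{i=2}^n(s_i-s_{i-1})^{-\zeta d/2}$. Since $\zeta d/2<1$, integrating successively over $s_n, s_{n-1},\ldots$ (or changing variables to the increments, each of which lies in $[0,1]$) gives a finite integral, at most $(1-\zeta d/2)^{-n}$. Hence \eqref{eq:Gram} holds for every $p$, and \Cref{thm:Weighted_SILT} yields the $L^p$ convergence.

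The main obstacle is the conditional variance lower bound. One must justify carefully that conditioning on the $\sigma$-algebra generated by $w_1$ up to $s_{i-1}$ dominates conditioning on the finitely many values $u_1(s_j)$, and handle ties or degenerate points, which have measure zero. The hypothesis $\zeta<2/d$ is used exactly to make the one-dimensional integrals $\int_0^1 h^{-\zeta d/2}\differential{h}$ finite. One should also note that the $p>2$ case of \Cref{thm:Weighted_SILT} is invoked as stated there.
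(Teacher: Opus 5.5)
Your proposal is correct and follows the paper's proof: you bound the Gram determinant below by $c\,t_1^{\zeta}\prod_i (t_{i+1}-t_i)^{\zeta}$ using the Volterra conditional-variance bound and $(2,\zeta)$-local nondeterminism, use $\zeta d/2<1$ for integrability, and then invoke \Cref{thm:Weighted_SILT}. Your write-up is in fact slightly more careful than the paper's in two places. First, you condition on the whole past $\sigma$-algebra $\sigma(w_1(r): r\le s_{i-1})$, whereas the paper conditions on finitely many values of $w_1$. Second, you make explicit the symmetrisation that replaces $\Delta_k^p$ by the ordered simplex $\Delta_{pk}$.
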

\begin{proof}[Proof of Theorem~\ref{thm:Weighted_SILT_V}] 
 Note that 
 \begin{align*} 
 G(u_1(t_1),\ldots,u_1(t_{pk}))&{}=\Var(u_1(t_1))\prod^{pk}_{i=2}\Var(u_1(t_i)\mid u_1(t_1),\ldots,u_1(t_{i-1}))\\
 &{}\geqslant \Var(u_1(t_1))\prod^{pk}_{i=2}\Var(u_1(t_i)\mid w_1(t_1),\ldots,w_1(t_{i-1}))\\
 &{}=\int^{t_1}_0k^2(t_1,r)\differential{r}\prod^{pk}_{i=2}\int^{t_i}_{t_{i-1}}k^2(t_i,r)\differential{r}
 \geqslant c t^{\zeta}_1\prod^{pk-1}_{i=1} (t_{i+1}-t_i)^\zeta,
 \end{align*}
 where $c$ is some positive constant by virtue of the $(2, \zeta)$-local nondeterminism property.
 Hence, 
  \begin{align}
    \int_{\Delta_{pk}}\frac{1}{G(u_1(t_1),\ldots,u_1(t_{pk})^{\frac{d}{2}}}\differential{t_1}\cdots\differential{t_{pk}}
           \leqslant c_1 \int_{\Delta_{pk}}\frac{1}{t^{\frac{\zeta d}{2}}_1\prod^{pk-1}_{i=1} (t_{i+1}-t_i)^{\frac{\zeta d}{2}}}\differential{t_1}\cdots\differential{t_{pk}} < \infty, 
           \label{eq:gram}
    \end{align}
    since for $\zeta\in(0,\frac{2}{d})$. The theorem now follows from Theorem~\ref{thm:Weighted_SILT}. 

 \end{proof}
    \section{Volterra Gaussian processes in stochastic flows}
\label{sec:VGSF}
\subsection{Stochastic flows with interaction}
\label{sec:stoch_flows}
Let $W$ be a Brownian sheet defined on the Borel subsets of $[0,\infty)\times\mathbb{R}^d$ with the finite Lebesgue measure (see \cite[Example 3.5]{Lifshits2012LecturesOnGaussianProcesses}).
Consider the functions $a:\ \mbR^d\times\mathcal{M}_m\to\mbR^d$ and $b:\ \mbR^d\times\mathcal{M}_m\times \mbR^d \to \mbR^d$ such that for all for all $x \in \mbR^d$ and $\nu \in \mathcal{M}_m$,  $b(x, \nu, \cdot) \in L^2(\mbR^d,\mbR^d)$, the equivalence class of square-integrable $\mbR^d$-valued functions with respect to the Lebesgue measure on $\mbR^d$.  Let us equip the space $\mbR^d \times \cM_m$ with the metric $\dist_m$ defined as follows:
\begin{align}
  \dist_m((x,\nu), (y,\mu)) = \norm{x-y} + \gamma_m(\nu,\mu),
\end{align}
where $\gamma_m$ is the $m$-th order Wasserstein metric on $\cM_m$. Let $\mu_0\in\cM_m$ be possibly random. Let us define the stochastic differential equation with interaction introduced by \cite{dorogovtsev2023measure}.
\begin{definition}
\label{def:Eq_interaction}
The stochastic differential equation
\begin{align}
\label{eq:Eq_interaction}
\begin{cases}
\differential{x}(v,t)=a(x(v,t),\mu_t)\differential{t}+\int_{\mbR^d}b(x(v,t),\mu_t,z)W(\differential{t},\differential{z}), \\	
x(v,0)=v,\ v\in\mbR^d, \\
\mu_t=\mu_0\circ x(\cdot,t)^{-1},\ t\geqslant 0, 
\end{cases}
\end{align}
is said to be the equation with interaction.
\end{definition}
Here $\{x(v,t) : t\geqslant 0\}$ is the trajectory of a particle starting from a point $v\in\mbR^d,$ and the probability measure $\mu_0$ describes the initial mass distribution of the particles. The measure $\mu_t$, which is the push-forward of $\mu_0$ by the mapping $x(\cdot,t):\mbR^d\to\mbR^d$, describes the mass distribution of particles at time $t.$ One of the interesting features of the equation with interaction is that the equation for the trajectory of the particle starting from a point $v\in\mbR^d$ contains  information about particles starting from all other points through $\mu_t$ in the coefficients. Let $\cF_t=\sigma\{\mu_0, W(\Delta) : \Delta\in \mathcal{A}_t\}$
where $\cA_t=\{A\in\cB([0,t]\times\mbR^d):\ \lambda_1\otimes\lambda_d(A)<\infty\}$. We include all $\prob$-null sets in  $\cF_t$ for all $t\geq0$ so that the filtered probability space $(\Omega, \cF, (\cF_t)_{t\ge 0}, \prob)$ is complete.
\begin{definition}
\label{def:solution}
The solution to the Cauchy problem \eqref{eq:Eq_interaction} corresponding to the coefficients $a,\ b$ and initial measure $\mu_0\in\cM_m$ is an  $\mbR^d$-valued random field $\{x(v,t): v\in\mbR^d, t\in[0,\infty)\}$ such that:
\begin{enumerate}
\item
For every $t\geq0$, the restriction of $x$ to the interval $[0,t]$ is $\cB(\mbR^d)\times\cB([0,t])\times\cF_t$-measurable. 
\item
For all $v\in\mbR^d$ and $t\geqslant 0$, the integral form of \eqref{eq:Eq_interaction} holds almost surely. 
\end{enumerate}
\end{definition}
The following two technical results, borrowed from \cite{dorogovtsev2023measure}, provide sufficient conditions  for the existence of a unique solution to \eqref{eq:Eq_interaction}, and provide moment estimates for the solution.  
\begin{theorem}
\label{thm:existence_solution} Suppose that 
there exists a constant $L>0$ such that for any $v_1,v_2\in\mbR^d,\ \mu^{(1)},\mu^{(2)}\in \cM_m$, 
$$
\|a(v_2,\mu^{(2)})-a(v_1,\mu^{(1)})\|+\Big(\int_{\mbR^d}\|b(v_2,\mu^{(2)},z)-b(v_1,\mu^{(1)},z)\|^2\differential{z}\Big)^{\frac{1}{2}} \leq L \dist_m((v_1, \mu^{(1)}), (v_2, \mu^{(2)})) .
$$
Moreover, suppose that $b$ is continuous with respect to variables $v$ and $\mu$. Then, the stochastic differential equation \eqref{eq:Eq_interaction} has a solution, which is unique, and for every $t>0$, the measure $\mu_t$ is a random element in $\cM_m.$
\end{theorem}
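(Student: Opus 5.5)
The plan is a Picard iteration in a complete space of random fields. The measure variable must be handled through the coupling induced by the flow itself. Because $\mu_t=\mu_0\circ x(\cdot,t)^{-1}$ and $\mu_0$ is $\cF_0$-measurable, for two candidate flows $x,y$ the plan $\mu_0\circ(x(\cdot,t),y(\cdot,t))^{-1}$ lies in $C(\mu^x_t,\mu^y_t)$. This gives the key estimate
$$
\gamma_m(\mu^x_t,\mu^y_t)^m\leqslant \int_{\mbR^d}\|x(v,t)-y(v,t)\|^m\mu_0(\differential{v}).
$$
So the Wasserstein distance is dominated by an average of pointwise differences, and the whole system closes in a norm combining $\sup_v$ and $\mu_0$-averaged moments.

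\textbf{Step 1: the iteration space.} Fix $T>0$. Consider progressively measurable random fields $x(v,t)$, $v\in\mbR^d$, $t\in[0,T]$, with $x(v,0)=v$, finite $\sup_{v}\sup_{t\le T}\E\|x(v,t)-v\|^m$ (taking $m\ge2$), and such that $\mu^x_t\in\cM_m$. Define $\Phi(x)(v,t)=v+\int_0^t a(x(v,s),\mu^x_s)\differential{s}+\int_0^t\int b(x(v,s),\mu^x_s,z)W(\differential{s},\differential{z})$. The stochastic integral against the Brownian sheet is an $L^2(\mbR^d)$-valued Itô integral. Its quadratic variation is $\int_0^t\int\|b\|^2\differential{z}\differential{s}$, which is finite by the Lipschitz bound (linear growth in $\|v\|+\gamma_m(\mu,\delta_0)$).

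\textbf{Step 2: contraction estimates.} By the Burkholder--Davis--Gundy and Hölder inequalities and the Lipschitz hypothesis,
$$
\E\|\Phi x(v,t)-\Phi y(v,t)\|^m\leqslant C\int_0^t\Big(\E\|x(v,s)-y(v,s)\|^m+\E\gamma_m(\mu^x_s,\mu^y_s)^m\Big)\differential{s}.
$$
Applying the coupling bound and conditioning on $\mu_0$ (which is $\cF_0$-measurable and independent of $W$) gives
$$
\E\gamma_m(\mu^x_s,\mu^y_s)^m\leqslant \E\int\|x(v,s)-y(v,s)\|^m\mu_0(\differential{v})\leqslant \sup_v\E\|x-y\|^m.
$$
Alternatively, one can work with $\E[\,\cdot\mid\mu_0]$ and treat $\mu_0$ as deterministic. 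Setting $D_n(t)=\sup_v\E\|x_{n+1}(v,t)-x_n(v,t)\|^m$, we get $D_n(t)\le 2C\int_0^tD_{n-1}(s)\differential{s}$. Hence $D_n\le (2Ct)^n/n!\cdot D_0$, and the iterates converge uniformly in $v$. The limit solves the equation, and a Gronwall argument on the same estimate gives uniqueness. The same growth estimate with $y\equiv v$ shows $\int\|x(v,t)-y_0\|^m\mu_0(\differential{v})<\infty$ whenever $\mu_0\in\cM_m$, so $\mu_t\in\cM_m$. Measurability of $t\mapsto\mu_t$ as a random element of $(\cM_m,\gamma_m)$ follows from measurability of $x$ jointly in $(v,t,\omega)$.

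\textbf{Step 3: joint measurability (main obstacle).} The hardest point is obtaining the jointly measurable (in $v$) version required in Definition~\ref{def:solution}. Without it, $\mu_0\circ x(\cdot,t)^{-1}$ is not even defined, since $x(v,t)$ is only determined up to null sets for each $v$. I would handle this inductively: at each Picard step, choose a version measurable in $(v,t,\omega)$. This is possible because the integrands are measurable in $v$, and stochastic integrals depending measurably on a parameter admit measurable versions (Stricker--Yor). The uniform $L^m$ convergence in $v$ then passes measurability to the limit along an a.s.\ convergent subsequence, which can be extracted with $\sum_n D_n^{1/m}<\infty$ and Borel--Cantelli pointwise in $v$.

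Continuity of $b$ in $(v,\mu)$ ensures the integrand $b(x(v,s),\mu_s,\cdot)$ is a measurable $L^2$-valued process. Uniqueness must hold in the class of measurable solutions, and that is where the coupling estimate of Steps 1–2 is essential.
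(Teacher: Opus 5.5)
The paper gives no argument for this statement; it only cites Dorogovtsev's monograph. Your central mechanism is the right one: closing the measure variable through the coupling $\mu_0\circ(x(\cdot,t),y(\cdot,t))^{-1}$, i.e.\ $\gamma_m(\mu^x_t,\mu^y_t)^m\leqslant\int\norm{x(v,t)-y(v,t)}^m\mu_0(\differential{v})$. However, the norm you iterate in does not work under the stated hypothesis.

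The Lipschitz condition only gives linear growth, $\norm{a(v,\mu)}+\norm{b(v,\mu,\cdot)}_{L^2}\leqslant C(1+\norm{v}+\gamma_m(\mu,\delta_0))$, and this growth really occurs. For example, $a(v,\mu)=v$, $b\equiv0$ satisfies every assumption and has solution $x(v,t)=ve^t$. So $\sup_v\E\norm{x(v,t)-v}^m=\infty$, and the solution is not in your iteration space. Moreover, starting from $x_0(v,t)=v$ the Picard differences are $x_1-x_0=tv$, $x_2-x_1=t^2v/2$, \dots. Hence $D_n=+\infty$ for every $n$, the bound $D_n\leqslant(2Ct)^nD_0/n!$ says nothing, and the iterates do not converge uniformly in $v$.

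The repair is easy but has to be made. One option is the weighted quantity $N(t)=\sup_v(1+\norm{v})^{-m}\E\bigl(\sup_{s\leqslant t}\norm{x(v,s)-y(v,s)}^m\mid\mu_0\bigr)$. Your two estimates then give $N_{\Phi}(t)\leqslant C\bigl(1+\int(1+\norm{w})^m\mu_0(\differential{w})\bigr)\int_0^tN(s)\differential{s}$. The constant is finite exactly because $\mu_0\in\cM_m$, and $N$ is finite for $x_1-x_0$ by linear growth. Equivalently, you can first close the recursion for the averaged quantity $\E\bigl(\int\norm{x_{n+1}-x_n}^m\differential{\mu_0}\mid\mu_0\bigr)$, which is finite at $n=0$, and then deduce convergence for each fixed $v$.

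A second option is to iterate only on the measure-valued input. For a frozen adapted $\cM_m$-valued process $\nu$, the equation is an ordinary SDE with coefficients Lipschitz in $v$. Its solution has a version continuous in $(v,t)$. Two such solutions started from the same $v$ satisfy $\E\sup_{s\leqslant t}\norm{y_1(v,s)-y_2(v,s)}^m\leqslant Ce^{CT}\int_0^t\E\gamma_m(\nu^1_s,\nu^2_s)^m\differential{s}$ uniformly in $v$, which is the form of Lemma~\ref{lem:moments}. This route also makes your Stricker--Yor measurability step unnecessary.

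Two smaller points. The unconditional chain $\E\int\norm{x-y}^m\differential{\mu_0}\leqslant\sup_v\E\norm{x-y}^m$ is not valid in general for random $\mu_0$, because the iterates depend on $\mu_0$. Only your conditional version, which treats $\mu_0$ as deterministic and uses its independence of $W$, is correct. Also, the Gronwall uniqueness argument needs a localisation by stopping times, since an arbitrary solution is not known a priori to have finite moments.
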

\begin{lemma}
\label{lem:moments}
Let $\mu^{(1)},\mu^{(2)}\in\cM_m$ be deterministic and let $x_1,x_2$ be the corresponding solutions to \eqref{eq:Eq_interaction}. Then, for every $T>0$, there exists a constant $c>0$ such that
$$
\Eof{\sup_{t\in [0,T]}\gamma_m(\mu^{(1)}_t,\mu^{(2)}_t)} \leqslant\ c\gamma_m(\mu^{(1)},\mu^{(2)})^m,\quad \text{ and } \quad 
\Eof{\sup_{t\in [0,T]}\norm{x_1(v,t)-x_2(v,t)}^m }\leqslant c\gamma_m(\mu^{(1)},\mu^{(2)})^m,\ v\in\mbR^d.
$$
\end{lemma}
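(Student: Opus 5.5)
The plan is a synchronous-coupling argument combined with the Burkholder--Davis--Gundy (BDG) inequality and Gronwall's lemma. I read the first estimate as a bound on $\Eof{\sup_{t\in[0,T]}\gamma_m(\mu^{(1)}_t,\mu^{(2)}_t)^m}$; that is the homogeneous form the argument naturally produces.

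\emph{Step 1: transport the coupling.} Let $\kappa\in C(\mu^{(1)},\mu^{(2)})$ be an optimal coupling for $\gamma_m(\mu^{(1)},\mu^{(2)})$. Since $\mu^{(i)}_t=\mu^{(i)}\circ x_i(\cdot,t)^{-1}$, the push-forward of $\kappa$ under $(u,v)\mapsto (x_1(u,t),x_2(v,t))$ lies in $C(\mu^{(1)}_t,\mu^{(2)}_t)$. Hence, almost surely, for all $t$,
$$
\gamma_m(\mu^{(1)}_t,\mu^{(2)}_t)^m\leqslant \int_{\mbR^d}\int_{\mbR^d}\|x_1(u,t)-x_2(v,t)\|^m\kappa(\differential{u},\differential{v}).
$$
The joint measurability in the definition of a solution makes this integral a well-defined random variable. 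We set
$$
\Phi(t)=\int\int \Eof{\sup_{s\leqslant t}\|x_1(u,s)-x_2(v,s)\|^m}\kappa(\differential{u},\differential{v}).
$$

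\emph{Step 2: pathwise estimate for a fixed pair $(u,v)$.} Subtract the integral forms of \eqref{eq:Eq_interaction} for $x_1(u,\cdot)$ and $x_2(v,\cdot)$. The difference is
$$
u-v+\int_0^t\big(a(x_1(u,s),\mu^{(1)}_s)-a(x_2(v,s),\mu^{(2)}_s)\big)\differential{s}+\int_0^t\int_{\mbR^d}\big(b(x_1(u,s),\mu^{(1)}_s,z)-b(x_2(v,s),\mu^{(2)}_s,z)\big)W(\differential{s},\differential{z}).
$$
We treat the three terms as follows.
\begin{enumerate}
\item The drift term is handled by H\"older's inequality in time.
\item The stochastic integral is handled by BDG (taking $m\geqslant 2$). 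Its quadratic variation is $\int_0^t\int\|b(\cdot)-b(\cdot)\|^2\differential{z}\,\differential{s}$.
\item Both integrands are then controlled by the Lipschitz condition of Theorem~\ref{thm:existence_solution} in the metric $\dist_m$.
\end{enumerate}
This gives
$$
\Eof{\sup_{s\leqslant t}\|x_1(u,s)-x_2(v,s)\|^m}\leqslant C\Big(\|u-v\|^m+\int_0^t \Eof{\|x_1(u,s)-x_2(v,s)\|^m}\differential{s}+\int_0^t\Eof{\gamma_m(\mu^{(1)}_s,\mu^{(2)}_s)^m}\differential{s}\Big),
$$
with $C$ depending only on $L$, $m$ and $T$.

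\emph{Step 3: close the loop.} Integrate the Step 2 bound against $\kappa$ and use Step 1 to bound the last term by $\int_0^t\Phi(s)\differential{s}$. This yields $\Phi(t)\leqslant C\gamma_m(\mu^{(1)},\mu^{(2)})^m+2C\int_0^t\Phi(s)\differential{s}$. Gronwall then gives $\Phi(T)\leqslant c\,\gamma_m(\mu^{(1)},\mu^{(2)})^m$, and by Step 1 this is the first estimate. For the second estimate, apply Step 2 with $u=v$, so the initial term vanishes. Insert the first estimate for the $\gamma_m$-term and apply Gronwall once more in $t$; this gives $c\,\gamma_m(\mu^{(1)},\mu^{(2)})^m$ uniformly in $v$.

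\emph{Main obstacle.} The difficult part is justifying the manipulations rather than the estimates themselves. Gronwall needs $\Phi$ finite, and Fubini needs the exchange of $\Eof{\cdot}$ and $\kappa$-integration. For this I would first prove an a priori bound $\Eof{\sup_{t\leqslant T}\|x_i(u,t)\|^m}\leqslant C(1+\|u\|^m+\int\|y\|^m\mu^{(i)}(\differential{y}))$. It follows from the same BDG and Gronwall argument, using the linear growth implied by the Lipschitz condition and the continuity of $b$. Since $\mu^{(i)}\in\cM_m$, this makes $\Phi(T)<\infty$. If needed, I would localise with stopping times $\tau_N=\inf\{t:\int\|x_i(u,t)\|^m\mu^{(i)}(\differential{u})>N\}$, then let $N\to\infty$ by monotone convergence.
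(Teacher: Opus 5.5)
Your argument is correct and is essentially the one the paper relies on. The paper gives no proof of this lemma and cites \cite{dorogovtsev2023measure} (Lemma 2.4.1, Theorem 2.4.1), which uses the same scheme: push an optimal coupling of $\mu^{(1)},\mu^{(2)}$ forward under $(u,v)\mapsto(x_1(u,t),x_2(v,t))$ with a common $W$, bound $\|x_1(u,t)-x_2(v,t)\|^m$ via the Lipschitz condition and BDG, integrate against the coupling, and close with Gronwall.

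Your reading of the first bound as $\Eof{\sup_{t\in[0,T]}\gamma_m(\mu^{(1)}_t,\mu^{(2)}_t)^m}\leqslant c\,\gamma_m(\mu^{(1)},\mu^{(2)})^m$ is the right one. As printed, the bound fails for $m>1$ already at $t=0$ when $\gamma_m(\mu^{(1)},\mu^{(2)})$ is small. Your restriction to $m\geqslant 2$ is harmless, since the paper applies the lemma with $m=4$; the range $1\leqslant m<2$ would only need the usual Young-inequality absorption of a supremum term after BDG.
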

The proofs of \Cref{thm:existence_solution} and \Cref{lem:moments} can be found in \cite[p. 49 - 51, Lemma 2.4.1, Theorem 2.4.1]{dorogovtsev2023measure}.
The aim of this section is to prove the existence of self-intersection local times for Volterra Gaussian processes in stochastic flows with interaction. Let $\{u(t) : t\in[0,1]\}$ be a Volterra Gaussian process in $\mbR^d$, independent of $W.$ Assume that $u$ is $(2,\zeta)$-locally nondeterministic for some $\zeta\in (0,\frac{2}{d})$. Let $\mu$ be the occupation measure of $u.$ It was proved in \cite{harang2022regularity} that $u$ has a jointly continuous local time $l$. Consider the equation with interaction \eqref{eq:Eq_interaction} with $\mu_0=\mu$. If coefficients satisfy the  conditions of Theorem \ref{thm:existence_solution}, then there exists a unique solution $\{x(v,t):\ v\in\mbR^d,\ t\geqslant 0\}$ and  $\mu_t$ is the occupation measure of the process $\{x(u(s),t):\ s\in[0,1]\}.$ If $x(\cdot,t):\mbR^d\to\mbR^d$ is diffeomorphism, then for any bounded and measurable function $\phi:\mbR^d\to\mbR^d$,
\begin{align*}
&{}\int_{\mbR^d}\phi(y)\mu_t(\differential{y})=\int_{\mbR^d}\phi(x(y,t))\mu(\differential{y}),
\text{ and } \int_{\mbR^d}\phi(x(y,t)l(y)\differential{y}=\int_{\mbR^d}\phi(y)l(x^{-1}(y,t))\frac{1}{\lvert \det \D x(y,t)\rvert}\differential{y}.
\end{align*}
So,  we can immediately conclude that $\mu_t\ll\lambda_d$ for all $t\ge 0$. Hence, for each $t>0,$ the local time $l_t$ of the process $\{x(u(s),t) :  s\in[0,1]\}$ exists and it has the following representation
$$
l_t(y)=l(x^{-1}(y,t))\frac{1}{\lvert \det \D x(y,t)\rvert},\ y\in\mbR^d,
$$
where $\D x(y,t)=(\partial_j x_i(y,t))^d_{i,j=1}$ is the Jacobian matrix.  
Unfortunately, the same method cannot be applied to determine self-intersection local times.
That is why we will use approximation arguments, as done before,  to define the random variable $T^{x(u,t)}_k.$ 
Nevertheless, the Jacobian matrix and its determinant will play a crucial role in our analysis. The following theorem describes the dynamics of the Jacobian matrix.

\begin{theorem}
  Assume that the coefficients $a$ and $b$ of \eqref{eq:Eq_interaction} are continuously differentiable with respect to $v$ for all $\mu\in\cM_m$. Moreover, assume there exist constants $C, L>0$ such that for all $v_1,v_2\in\mbR^d,\ \mu^{(1)},\mu^{(2)}\in\cM_m, $
\begin{align*}
\norm{\D a(v_2,\mu^{(2)})-\D a(v_1,\mu^{(1)})}+\left(\int_{\mbR^d}\|\D b(v_2,\mu^{(2)},z)-\D b(v_1,\mu^{(1)},z)\|^2_{HS}\differential{z}\right)^{\frac{1}{2}} &{}\leq L \dist_m((v_1, \mu^{(1)}), (v_2, \mu^{(2)})), 
\\
\sup_{\mu\in\cM_m}\sup_{v\in\mbR^d}\left(\norm{\D a(v,\mu)}+\Big(\int_{\mbR^d}\|\D b(v,\mu,z)\|^2_{HS}\differential{z}\right)^{\frac{1}{2}}\Big)&{} \leqslant C.
\end{align*}
Then, there exists a modification of the solution to \eqref{eq:Eq_interaction} that is a stochastic flow of diffeomorphisms (see Appendix \ref{def:stochastic flows}) such that $\det\D x(v,t)$, the determinant of the Jacobian,  satisfies the following equation 
\begin{align*}
\det\D x(v,t) &{} =\exp\left(\int^t_0 \trace\  \D a(x(v,s),\mu_s)\differential{s}
-\frac{1}{2} \trace\int^t_0\int_{\mbR^d}\D b(x(v,s),\mu_s,z)\odot \D b(x(v,s),\mu_s,z)\differential{z}\differential{s}\right.\\
&{}\quad\quad\quad  \left.+\trace\int^t_0\int_{\mbR^d}\D b(x(v,s),\mu_s,z)W(\differential{s},\differential{z})\right).
\end{align*}
\label{thm:determinant}
\end{theorem}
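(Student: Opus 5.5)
The plan is to reduce everything to the classical theory of stochastic flows of diffeomorphisms (Kunita) by \emph{freezing} the measure-valued process. By \Cref{thm:existence_solution}, the equation with interaction has a unique solution, and $\{\mu_t\}$ is an $(\cF_t)$-adapted, continuous $\cM_m$-valued process. Define random, adapted coefficients $\tilde a(v,t,\omega)=a(v,\mu_t(\omega))$ and $\tilde b(v,t,\omega,z)=b(v,\mu_t(\omega),z)$. Then $x(\cdot,t)$ solves the ordinary (non-interacting) SDE
\begin{align*}
\differential{x}(v,t)=\tilde a(x(v,t),t)\differential{t}+\int_{\mbR^d}\tilde b(x(v,t),t,z)W(\differential{t},\differential{z}),\qquad x(v,0)=v .
\end{align*}
The Brownian sheet can be written as a cylindrical Wiener process $\sum_j \int e_j(z)W(\differential{t},\differential{z})$ for an orthonormal basis $\{e_j\}$ of $L^2(\mbR^d)$. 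This turns the SDE into one driven by countably many independent Brownian motions, with coefficients $\tilde b_j(v,t)=\int \tilde b(v,t,z)e_j(z)\differential{z}$.

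The first step is to verify Kunita's hypotheses for this frozen equation. The hypotheses of the theorem give continuous differentiability in $v$, uniform bounds on $\D a$ and $\int\|\D b\|_{HS}^2\differential{z}$, and Lipschitz continuity of the derivatives. They therefore yield a local characteristic that is $C^{1,\delta}$ in space with uniformly bounded derivatives, uniformly in $(t,\omega)$. Kunita's theorem, in the version allowing adapted random coefficients, then gives a modification of $x$ that is a stochastic flow of $C^1$ diffeomorphisms. Uniqueness from \Cref{thm:existence_solution} identifies this modification with the solution of \eqref{eq:Eq_interaction}. Moreover, the Jacobian $J(v,t)=\D x(v,t)$ satisfies the linearised equation
\begin{align*}
\differential{J}(v,t)=\D a(x(v,t),\mu_t)J(v,t)\differential{t}+\int_{\mbR^d}\D b(x(v,t),\mu_t,z)J(v,t)W(\differential{t},\differential{z}),\qquad J(v,0)=I,
\end{align*}
obtained by differentiating the integral equation in $v$. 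The justification is the standard mean-square differentiability argument, using the moment estimates of the type in \Cref{lem:moments} for the frozen equation.

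The second step is the determinant formula. Write $A_t=\D a(x(v,t),\mu_t)$ and $B_t(z)=\D b(x(v,t),\mu_t,z)$. Apply Itô's formula to $\det J$, using $\partial\det(J)/\partial J=\det(J)J^{-\top}$ together with the second-order expansion of the determinant. This shows that $D_t=\det J(v,t)$ solves a scalar linear SDE:
\begin{align*}
\differential{D_t}=D_t\left(\trace A_t\,\differential{t}+\int\trace B_t(z)W(\differential{t},\differential{z})+\tfrac12\int\big[(\trace B_t(z))^2-\trace(B_t(z)^2)\big]\differential{z}\,\differential{t}\right).
\end{align*}
Solving this scalar equation as a stochastic exponential, or equivalently applying Itô to $\log D_t$, removes the $\tfrac12(\trace B)^2$ term. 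What remains in the exponent is $\trace A$, the martingale term, and the correction $-\tfrac12\int \trace\big(B_t(z)B_t(z)\big)\differential{z}$. This is the quadratic correction term appearing in the statement.

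The main obstacle is this last identification. The correction obtained from Itô's formula is $\sum_{i,j}\partial_jb_i\,\partial_ib_j$, whereas the statement writes $\trace(\D b\odot\D b)=\sum_i(\partial_ib_i)^2$. The two agree only after the off-diagonal cross terms cancel or vanish, for instance under a structural assumption on $b$. I would therefore first do the Itô computation carefully entrywise, via the cofactor expansion of $\differential\det$, to see exactly which terms survive. A secondary point is that $D_t$ never vanishes, which is needed for the logarithm. This follows because $D_t$ solves a linear SDE with bounded coefficients, with integrability coming from the uniform bound $C$; alternatively, it follows from the existence of the inverse flow.
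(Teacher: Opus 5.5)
You located the difficulty correctly but did not resolve it, and under the noise model you adopted it cannot be resolved. Your Itô computation is right for a scalar Brownian sheet, where every coordinate of $x$ is driven by the same $w_j$. In that case the exponent carries $-\frac12\int_0^t\int_{\mbR^d}\trace(\D b\,\D b)\,\differential{z}\,\differential{s}$, and the stated formula with $\trace(\D b\odot\D b)$ is false in general.

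For instance, take $d=2$, $a=0$ and $b(v,\mu,z)=\phi(z)Mv$, with $\norm{\phi}_{L^2}=1$, $M_{12}=M_{21}=1$ and $M_{11}=M_{22}=0$. All hypotheses hold. Let $w(t)=\int_0^t\int\phi(z)\,W(\differential{s},\differential{z})$. Then $\D x(v,t)=J_t$ with $\differential{J_t}=MJ_t\,\differential{w(t)}$, so $J_t=\exp(Mw(t)-\frac12M^2t)$ and $\det\D x(v,t)=e^{-t}$. The stated formula gives $1$, since $\trace M=0$ and $\trace(M\odot M)=0$. So no finer entrywise computation will produce the Hadamard form. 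An assumption on $b$ alone would have to be the ad hoc condition $\sum_{i\neq j}\int\partial_jb_i\,\partial_ib_j\,\differential{z}=0$.

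The paper gets the Hadamard form by putting the structure on the noise instead. Its proof asserts that $\langle(\D x)_{ij},(\D x)_{lm}\rangle_t=0$ for $i\neq l$. This means row $i$ of $\D x$ (the $i$-th coordinate of $x$) is driven by its own independent noise; that is what ``independent $\mbR^d$-valued $w_k$'' means there, with component $i$ of $w_k$ acting on row $i$. Because $\det$ is affine in each row, $\partial^2\det/\partial x_{ij}\partial x_{im}=0$, so the entire second-order Itô term vanishes. Using $\sum_j((\D x)^{-1})_{ji}(\D b_k\D x)_{ij}=(\D b_k)_{ii}$, one gets $\differential{\det\D x}=\det\D x\,(\trace\D a\,\differential{t}+\sum_k\sum_i(\D b_k)_{ii}\,\differential{w_k^i})$. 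The exponential correction is then $\frac12\sum_k\sum_i(\D b_k)_{ii}^2=\frac12\trace\int\D b\odot\D b\,\differential{z}$, which is exactly the stated formula.

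This row-independence is not implied by the setup as literally written (a single Brownian sheet and $\mbR^d$-valued $b$), so the discrepancy you flagged is genuine. To prove the theorem as the paper intends, you must build this coordinatewise-independent noise into your cylindrical expansion; your cofactor computation then finishes immediately. With a literal scalar sheet, what you can prove is the statement with $\odot$ replaced by the ordinary matrix product. Your first step, freezing $\mu_t$ and applying Kunita's theory with adapted coefficients, is a reasonable substitute for the paper's citation of Dorogovtsev for the flow of diffeomorphisms and the Jacobian equation.
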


\begin{proof}[Proof of \Cref{thm:determinant}]
The fact that there exists a modification of the solution to \eqref{eq:Eq_interaction} that is a stochastic flow of diffeomorphisms follows from \cite{dorogovtsev2023measure}. 
The Jacobian matrix $\D x(v,t),\ v\in\mbR^d,\ t\geqslant 0$ satisfies the following stochastic differential equation 
\begin{align}
\label{eq:derrivative}
\begin{cases}
\differential{\D x}(v,t)=\D a(x(v,t),\mu_t)\D x(v,t)\differential{t}+\int_{\mbR^d}\D b(x(v,t),\mu_t,z)W(\differential{t},\differential{z})\D x(v,t),\\	
\D x(v,0)=I_{d},\ v\in\mbR^d,\\
\mu_t=\mu_0\circ x(\cdot,t)^{-1},\ t\geqslant 0,
\end{cases}
\end{align}
where $I_{d}$ is the $d\times d$ identity matrix. Choosing an orthonormal basis, the stochastic differential equation \eqref{eq:derrivative} can be equivalently rewritten as 
\begin{align}
\label{eq:derrivative_equivqlent}
\begin{cases}
\differential{Dx}(v,t)=\D a(x(v,t),\mu_t)\D x(v,t)\differential{t}+\sum^{\infty}_{k=1}\D b_k(x(v,t),\mu_t)\differential{w}_k(t)\D x(v,t),\\	
Dx(v,0)=I_{d},\ v\in\mbR^d,\\
\mu_t=\mu_0\circ x(\cdot,t)^{-1},\ t\geqslant 0,
\end{cases}
\end{align}
where $w_k(t),\ t\in[0,1],\ k\geqslant 1$ are independent $\mbR^d$-valued Wiener processes, and $b_k,\ k \geq 1$ are appropriate square-integrable functions. 
It follows from the It\^o formula that
\begin{align*}
&{} \differential{\det\D x}(v,t)=\sum^d_{i,j=1}\frac{\partial \det\D x(v,t)}{\partial x_{ij}}\differential{Dx(v,t)_{ij}}
+\frac{1}{2}\sum^d_{i,j,l,m=1}\frac{\partial^2 \det \D x(v,t)}{\partial x_{ij}\partial x_{l,m}}\differential{\langle \D x(v,\cdot)_{ij},\D x(v,\cdot)_{lm}\rangle_t}.
\end{align*}
%
Note that 
\begin{align*}
  \langle \D x(v,\cdot)_{ij},\D x(v,\cdot)_{lm}\rangle_t & = \begin{cases}
    \sum^{\infty}_{k=1}(\D b_k(x(v,t),\mu_t)\D x(v,t))_{ij}(\D b_k(x(v,t),\mu_t)\D x(v,t))_{lm} \differential{t} & \text{ if } i=l, \\
    0 & \text{ if } i\neq l.
  \end{cases}
\end{align*}
%
Then, 
\begin{align*}
&{}\sum^d_{i,j,l,m=1}\frac{\partial^2 \det\D x(v,t)}{\partial x_{ij}\partial x_{lm}}\differential{\langle \D x(v,\cdot)_{ij},\D x(v,\cdot)_{im}\rangle_t}\\
&{}=\sum^d_{i,j,m=1}\sum^{\infty}_{k=1}\frac{\partial^2 \det\D x(v,t)}{\partial x_{ij}\partial x_{im}}(\D b_k(x(v,t),\mu_t)\D x(v,t))_{ij}(\D b_k(x(v,t),\mu_t)\D x(v,t))_{im}\differential{t}=0.
\end{align*}
The last equality follows from the fact that $\frac{\partial^2 \det A}{\partial a_{ij}\partial a_{im}}
 = (-1)^{1+m}\frac{\partial \det M_{im}}{\partial a_{ij}}=0,
$
for an invertible $A\in\mbR^{d\times d},$ 
where $M_{im}$ is the minor of the matrix that remains after deleting the $i$-th row and $m$-th
column. Hence, 
\begin{align*}
\differential{\det \D x}(v,t)&{}=\sum^d_{i,j=1}\frac{\partial \det \D x(v,t)}{\partial x_{ij}}\D a(x(v,t),\mu_t)\D x(v,t)\differential{t}
+\sum^d_{i,j=1}\sum^{\infty}_{k=1}\frac{\partial \det \D x(v,t)}{\partial x_{ij}}\D b_k(x(v,t),\mu_t)\differential{w}_k(t)\D x(v,t)\\
&{}= \trace\ \D a(x(v,t),\mu_t)\det\D x(v,t)\differential{t}
+\sum^{\infty}_{k=1}\trace\ \D b_k(x(v,t),\mu_t)\differential{w}_k(t)\det\D x(v,t),
\end{align*}
where to get the last equality we apply the following identity: Let $A$ be invertible, then
$$
\sum^d_{i,j=1}\frac{\partial\det A}{\partial a_{ij}}(BA)_{ij}=\det A\ \trace\  B.
$$
Using the It\^o formula, we  deduce that the solution to the stochastic differential equation
\begin{align*}
\differential{\det\D x}(v,t)&{} = \trace\ \D a(x(v,t),\mu_t)\det\D x(v,t)\differential{t}
+\sum^{\infty}_{k=1}\trace\ \D b_k(x(v,t),\mu_t)\differential{w}_k(t)\det\D x(v,t)
\end{align*}
has the following representation
\begin{align*}
\differential{\det\D x}(v,t)&{} =\exp\left(\int^t_0 (\trace\  \D a(x(v,s),\mu_s)ds\right.
\left.-\frac{1}{2}\sum^{\infty}_{k=1}\trace\int^t_0\D b_k(x(v,s),\mu_s)\odot \D b_k(x(v,s),\mu_s))\differential{s}\right.\\
&{}\quad \quad \quad \quad 
\left.+\sum^{\infty}_{k=1}\int^t_0 \trace\  \D b_k(x(v,s),\mu_s)\differential{w_k}(s)\right),
\end{align*}
which finishes the proof.
\end{proof}

To construct weighted self-intersection local times for Volterra Gaussian process, define the weight function 
$$
\rho(v,t)=\frac{1}{\mid \det \D x(v,t)\mid},\ v\in\mbR^d, t\ge 0.
$$
The next lemma shows that the moments of $\rho$ are uniformly bounded.  We omit the proof of \Cref{lem:moment estimate} since it can be adapted from  \cite[Lemma 3.2]{dorogovtsev2023intermittency}.

\begin{lemma}
\label{lem:moment estimate} Assume that the coefficients $a$ and $b$ of \eqref{eq:Eq_interaction} satisfy the conditions of \Cref{thm:existence_solution} and \Cref{thm:determinant}. Then, for all $T>0$ and $k\geqslant 1$,
\begin{align}
&{}  
 \sup_{v\in\mbR^d}\Eof{\sup_{0\leqslant t\leqslant T}\mid\rho(v,t)\mid^k}<\infty.
 \label{eq:moments estimate}
\end{align}
\end{lemma}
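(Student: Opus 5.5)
By \Cref{thm:determinant}, the weight is the inverse of a stochastic exponential. For fixed $v$ and $k\geqslant 1$ we can write
\begin{align*}
|\rho(v,t)|^k=\exp\Big(-k\int^t_0 A_s\,\differential{s}+\frac{k}{2}\int^t_0 B_s\,\differential{s}-k M_t\Big),
\end{align*}
with the following ingredients:
\begin{itemize}
\item $A_s=\trace\,\D a(x(v,s),\mu_s)$;
\item $B_s=\trace\int_{\mbR^d}\D b(x(v,s),\mu_s,z)\odot\D b(x(v,s),\mu_s,z)\,\differential{z}$;
\item $M_t=\trace\int^t_0\int_{\mbR^d}\D b(x(v,s),\mu_s,z)\,W(\differential{s},\differential{z})$, a continuous local martingale with respect to $(\cF_t)$.
\end{itemize}

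The first step is to bound the finite-variation part. The uniform bound in \Cref{thm:determinant} gives $|A_s|\leqslant dC$. It also gives $|B_s|\leqslant\int\|\D b\|_{HS}^2\,\differential{z}\leqslant C^2$, since the trace of a Hadamard square is a sum of squares of diagonal entries. Both bounds are deterministic and uniform in $v$, $s$ and $\mu_s$. Hence
\[
\sup_{t\leqslant T}|\rho(v,t)|^k\leqslant e^{kdCT+kC^2T/2}\sup_{t\leqslant T}e^{-kM_t}.
\]

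The second step controls the martingale part. Its quadratic variation satisfies
\[
\langle M\rangle_t=\int^t_0\int_{\mbR^d}\big(\trace\,\D b\big)^2\,\differential{z}\,\differential{s}\leqslant d\,C^2 t,
\]
by Cauchy--Schwarz $(\trace D)^2\leqslant d\|D\|_{HS}^2$. The key trick is to write
\[
e^{-kM_t}=\mathcal{E}_t\,e^{\frac{k^2}{2}\langle M\rangle_t}\leqslant e^{k^2dC^2T/2}\,\mathcal{E}_t,
\qquad \mathcal{E}_t=\exp\big(-kM_t-\tfrac{k^2}{2}\langle M\rangle_t\big).
\]
Because $\langle M\rangle$ is bounded, Novikov's criterion applies and $\mathcal{E}$ is a true martingale. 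Applying the same bound with $2k$ in place of $k$ shows that $\mathcal{E}$ is in fact square integrable. Doob's $L^2$ maximal inequality then gives
\[
\Eof{\sup_{t\leqslant T}\mathcal{E}_t}\leqslant\Big(\Eof{\sup_{t\leqslant T}\mathcal{E}_t^2}\Big)^{1/2}\leqslant 2\big(\Eof{\mathcal{E}_T^2}\big)^{1/2}.
\]
Next we bound $\mathcal{E}_T^2$:
\[
\mathcal{E}_T^2=\exp\big(-2kM_T-2k^2\langle M\rangle_T\big)+\text{(compensator adjustment)}\leqslant e^{k^2dC^2T}\exp\big(-2kM_T-\tfrac{(2k)^2}{2}\langle M\rangle_T\big).
\]
The last factor is again a Novikov martingale, so its expectation equals $1$. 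Combining the steps,
\[
\Eof{\sup_{t\leqslant T}|\rho(v,t)|^k}\leqslant 2\exp\big(kdCT+kC^2T/2+\tfrac{3}{2}k^2dC^2T\big),
\]
and this bound is independent of $v$. This proves \eqref{eq:moments estimate}.

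The main obstacle is justifying the stochastic-integral manipulations for the infinite-dimensional noise when the integrand depends on the random measure flow $\mu_s$. I would handle it through the series representation $\sum_k\D b_k\,\differential{w_k}$ used in the proof of \Cref{thm:determinant}. The relevant facts are that $\mu_s$ is $\cF_s$-adapted and that $x$ is progressively measurable by Definition~\ref{def:solution}. Together these make $M$ a well-defined continuous local martingale whose bracket is given by the $L^2(\mbR^d)$-norm computation above.

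There is one point to check: $\mu_0$ (the occupation measure of $u$) is random. This is harmless because $\mu_0$ is $\cF_0$-measurable, and every bound above is deterministic and does not depend on $\mu_0$.
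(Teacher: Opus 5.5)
Your proof is correct. The paper gives no argument of its own here; it only says the proof can be adapted from \cite[Lemma 3.2]{dorogovtsev2023intermittency}. Your proposal supplies a complete, self-contained version of the natural argument:
\begin{itemize}
\item take the exponential representation from \Cref{thm:determinant};
\item bound the $\differential{t}$-integrands and $\langle M\rangle_T$ deterministically, using the uniform bounds on $\D a$ and on $\int\|\D b\|_{HS}^2\differential{z}$;
\item control $\sup_{t\leqslant T}e^{-kM_t}$ via Novikov and Doob's $L^2$ inequality.
\end{itemize}
The resulting explicit bound is uniform in $v$ and independent of $\mu_0$.

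There are two cosmetic slips. First, the ``compensator adjustment'' is multiplicative, not additive: $\mathcal{E}_T^2=e^{k^2\langle M\rangle_T}\exp\bigl(-2kM_T-2k^2\langle M\rangle_T\bigr)$. Second, tracking the square root from Doob gives the exponent $k^2dC^2T$ rather than $\tfrac32 k^2dC^2T$; your bound is still valid, just weaker.

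Your remark that a random $\mu_0$ is harmless actually rests on the paper's standing assumption that $u$ is independent of $W$. That independence is what keeps $W$ a white noise with respect to $(\cF_t)$, so that $M$ is a local martingale and Novikov applies.

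Finally, your argument only uses that the $\differential{t}$-part of $\log\lvert\det\D x\rvert$ has a deterministically bounded integrand. It is therefore insensitive to the exact form of the It\^o correction in \Cref{thm:determinant}. Redoing that computation for $\differential{X}=BX\,\differential{w}$ gives the correction $-\tfrac12\trace(B^2)$ (integrated in $z$) rather than $-\tfrac12\trace(B\odot B)$. Since $\lvert\trace(B^2)\rvert\leqslant\|B\|_{HS}^2$, that term is bounded in exactly the same way.
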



\begin{theorem}
\label{thm:detrminant difference estimate} 
Let $\mu^{(1)}$ and $\mu^{(2)}$ be deterministic probability measures in $\cM_m$ and $x_1$ and $x_2$ be the corresponding solutions to \eqref{eq:Eq_interaction} with the initial measures $\mu^{(1)}$ and $\mu^{(2)}$, respectively.  
Let
$\rho^{(k)}_i(v,t)=\frac{1}{\mid \det \D x_i(v,t)\mid^{k-1}},$ for $ i=1,2.$   Then, 
$$
\sup_{v\in\mbR^d}\sup_{t\in[0,T]}\Eof{\left(\rho^{(k)}_2(v,t)-\rho^{(k)}_1(v,t)\right)^2}\leqslant c\gamma_4(\mu^{(1)},\mu^{(2)})^2,
$$
for all $T>0$, $k\geqslant 2$,  and $c$ is some a constant.
\end{theorem}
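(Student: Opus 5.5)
We use the exponential representation of \Cref{thm:determinant}. Write
\[
\rho^{(k)}_i(v,t)=\exp\bigl(-(k-1)Z_i(v,t)\bigr),
\]
where
\[
Z_i(v,t)=\int_0^t \trace\,\D a(x_i(v,s),\mu^{(i)}_s)\,\differential{s}-\frac12\,\trace\int_0^t\!\!\int_{\mbR^d}\D b\odot\D b\,\differential{z}\,\differential{s}+\trace\int_0^t\!\!\int_{\mbR^d}\D b(x_i(v,s),\mu^{(i)}_s,z)\,W(\differential{s},\differential{z}).
\]
By the elementary inequality $|e^{x}-e^{y}|\leqslant |x-y|(e^{x}+e^{y})$ and Cauchy--Schwarz,
\[
\Eof{(\rho^{(k)}_2-\rho^{(k)}_1)^2}\leqslant (k-1)^2\,\Eof{|Z_2-Z_1|^4}^{1/2}\,\Eof{(\rho^{(k)}_1+\rho^{(k)}_2)^4}^{1/2}.
\]
The second factor is bounded uniformly in $v$ and $t\in[0,T]$ by \Cref{lem:moment estimate}, applied with moment order $4(k-1)$. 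So the whole task reduces to the estimate
\[
\sup_{v}\sup_{t\leqslant T}\Eof{|Z_2(v,t)-Z_1(v,t)|^4}\leqslant c\,\gamma_4(\mu^{(1)},\mu^{(2)})^4 .
\]

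To prove this estimate we split $Z_2-Z_1$ into its three pieces: the drift term, the It\^o correction term, and the stochastic integral.

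\textbf{Drift term.} By Jensen, the fourth moment of $\int_0^t(\trace\D a(x_2,\mu^{(2)}_s)-\trace\D a(x_1,\mu^{(1)}_s))\,\differential{s}$ is at most $T^3\int_0^t\Eof{|\cdot|^4}\,\differential{s}$. The Lipschitz hypothesis on $\D a$ in the metric $\dist_m$ (taking $m=4$) bounds the integrand by $c\bigl(\Eof{\norm{x_2(v,s)-x_1(v,s)}^4}+\Eof{\gamma_4(\mu^{(1)}_s,\mu^{(2)}_s)^4}\bigr)$.

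\textbf{Correction term.} We write $\D b_2\odot\D b_2-\D b_1\odot\D b_1=(\D b_2-\D b_1)\odot(\D b_2+\D b_1)$. Cauchy--Schwarz in $z$ then gives a bound by the $L^2(\differential{z})$ Lipschitz norm of $\D b$ times the uniform bound $C$. The resulting estimate is of the same form as for the drift term.

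\textbf{Stochastic integral.} Burkholder--Davis--Gundy bounds its fourth moment by $c\,\Eof{\bigl(\int_0^t\int\|\D b_2-\D b_1\|_{HS}^2\,\differential{z}\,\differential{s}\bigr)^2}$. The Lipschitz hypothesis on $\D b$ again reduces this to the same two quantities.

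Finally we invoke \Cref{lem:moments} with $m=4$. It gives $\Eof{\sup_{s\leqslant T}\norm{x_1(v,s)-x_2(v,s)}^4}\leqslant c\,\gamma_4(\mu^{(1)},\mu^{(2)})^4$ uniformly in $v$, together with the corresponding bound for the Wasserstein distance between $\mu^{(1)}_s$ and $\mu^{(2)}_s$. Substituting these bounds and taking the square root yields the claimed $c\,\gamma_4(\mu^{(1)},\mu^{(2)})^2$.

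The main obstacle is the measure term. As printed, \Cref{lem:moments} controls $\Eof{\sup_s\gamma_m(\mu^{(1)}_s,\mu^{(2)}_s)}$ only at first power, whereas the argument needs a fourth moment. I would first check, or re-derive from the proof in \cite{dorogovtsev2023measure}, that the intended statement is $\Eof{\sup_s\gamma_m(\mu^{(1)}_s,\mu^{(2)}_s)^m}\leqslant c\,\gamma_m(\mu^{(1)},\mu^{(2)})^m$. Failing that, I would bound $\gamma_4(\mu^{(1)}_s,\mu^{(2)}_s)^4$ directly. To do so, couple the initial measures optimally by $\kappa$, so that
\[
\gamma_4(\mu^{(1)}_s,\mu^{(2)}_s)^4\leqslant\int\norm{x_1(y,s)-x_2(z,s)}^4\,\kappa(\differential{y},\differential{z}).
\]
The expectation of the right-hand side is then controlled by the flow's Lipschitz dependence on the starting point and by \Cref{lem:moments}, which gives $c\,\gamma_4(\mu^{(1)},\mu^{(2)})^4$.
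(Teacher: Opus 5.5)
Your proof is correct, and it takes a different route from the paper's. The paper works with the linear SDE satisfied by $\rho^{(k)}_i$ rather than with its explicit exponential form. It subtracts the two equations and splits the difference into two kinds of terms. Terms of the form (coefficient difference along the two flows)$\,\times\,\rho^{(k)}_2$ are bounded by $c\,\gamma_4(\mu^{(1)},\mu^{(2)})^2$ via Cauchy--Schwarz. Terms of the form (coefficient)$\,\times\,(\rho^{(k)}_2-\rho^{(k)}_1)$ produce $c\int_0^t\Eof{(\rho^{(k)}_2-\rho^{(k)}_1)^2}\differential{s}$, and Gr\"onwall closes the estimate. You stay at the level of the exponent instead. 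The mean-value bound $|e^x-e^y|\leqslant|x-y|(e^x+e^y)$ and one Cauchy--Schwarz decouple the problem into the uniform moment bound of \Cref{lem:moment estimate} and a fourth-moment Lipschitz estimate for $Z_2-Z_1$. So there is no feedback term and no Gr\"onwall step. The two arguments use the same inputs: moments of order $4(k-1)$ of $1/\lvert\det\D x_i\rvert$; the Lipschitz bounds on $\D a,\D b$ in $\dist_4$ together with their uniform bound $C$; and fourth moments of $\norm{x_1(v,s)-x_2(v,s)}$ and of $\gamma_4(\mu^{(1)}_s,\mu^{(2)}_s)$. Your version is therefore shorter and more transparent. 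The paper's route needs only the equation for $\rho^{(k)}$, not its explicit solution, so it is the one that would carry over to weights without a closed form. Your remark on \Cref{lem:moments} is well taken. The paper's proof also silently needs $\Eof{\gamma_4(\mu^{(1)}_s,\mu^{(2)}_s)^4}\leqslant c\,\gamma_4(\mu^{(1)},\mu^{(2)})^4$, since it applies Cauchy--Schwarz against $\rho^{(k)}_2$. Your coupling argument supplies this bound once you add the standard estimate $\Eof{\norm{x_1(y,s)-x_1(z,s)}^4}\leqslant c\norm{y-z}^4$. That estimate follows from BDG and Gr\"onwall, because both trajectories are driven by the same measure process $\mu^{(1)}_s$ and only spatial Lipschitz continuity is involved.
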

\begin{proof}[Proof of \Cref{thm:detrminant difference estimate}]
 Note that $\rho^{(k)}_i$ satisfies the following stochastic differential equation
\begin{align*}
\differential{\rho}^{(k)}_i(v,t) &{}=-(k-1)\trace\  \D a(x_i(v,t),\mu^{(i)}_t)\rho^{(k)}_i(v,t)\differential{t} 
\nonumber \\
&{}\quad 
+\frac{k(k-1)}{2}\int_{\mbR^d}\trace\ \D b(x_i(v,t),\mu^{(i)}_t,z)\odot \D b(x_i(v,t),\mu^{(i)}_t,z))\differential{z}\ \rho^{(k)}_i(v,t)\differential{t} \nonumber \\
&{}\quad 
-(k-1)\trace\int_{\mbR^d}\D b(x_i(v,t),\mu^{(i)}_t,z)\rho^{(k)}_i(v,t)W(\differential{t},\differential{z}). 
\end{align*}
Therefore, for each $v\in\mbR^d$ and $t\in[0,T]$, 
 \begin{align*}
\Eof{\left(\rho^{(k)}_2(v,t)-\rho^{(k)}_1(v,t)\right)^2} &{} 
\leqslant c \Eof{\left(\int^t_0\left(\trace\  \D a(x_2(v,s),\mu^{(2)}_s)-\trace\  \D a(x_1(v,s),\mu^{(1)}_s)\right)\rho^{(k)}_2(v,s)\differential{s}\right)^2} \nonumber\\
&{}\quad 
+c \Eof{\left(\int^t_0 \trace\ \D a(x_1(v,s),\mu^{(1)}_s)(\rho^{(k)}_2(v,s)-\rho^{(k)}_1(v,s))\differential{s}\right)^2 }\nonumber\\
&{}\quad 
+\frac{ck^2}{4}\E\left(\left(\int^t_0\int_{\mbR^d}\left(\trace\ \D b(x_2(v,s),\mu^{(2)}_s,z)\odot \D b(x_2(v,s),\mu^{(2)}_s,z)\right.\right. \right. \nonumber\\
&{}\quad 
\left.\left. \left.-\trace\ \D b(x_1(v,s),\mu^{(1)}_s,z)\odot \D b(x_1(v,s),\mu^{(1)}_s,z)\right)\rho^{(k)}_2(v,s)\differential{z}\differential{s}\right)^2 \right)\nonumber\\
&{}\quad 
+\frac{ck^2}{4}\E\left(\left(\int^t_0\int_{\mbR^d}\trace\ \D b(x_1(v,s),\mu^{(1)}_s,z)\odot \D b(x_1(v,s),\mu^{(1)}_s,z)\right. \right.\nonumber\\
&{}\quad \quad \quad 
\left.\left. \times (\rho^{(k)}_2(v,s)-\rho^{(k)}_1(v,s))\differential{z}\differential{s}\right)^2 \right)\nonumber\\
&{}\quad 
+c\E\left(\left(\int^t_0\int_{\mbR^d}(\trace\  \D b(x_2(v,s),\mu^{(2)}_s,z)-\trace\  \D b(x_1(v,s),\mu^{(1)}_s,z))\right. \right. \nonumber\\
&{}\quad \quad \quad 
\left.\left. \times\rho^{(k)}_2(v,s)W(\differential{s},\differential{z})\right)^2 \right)\nonumber\\
&{}\quad 
+c\Eof{\left(\int^t_0\int_{\mbR^d}\trace\  \D b(x_1(v,s),\mu^{(1)}_s,z))(\rho^{(k)}_2(v,s)-\rho^{(k)}_1(v,s))W(\differential{s},\differential{z})\right)^2},  
 \end{align*}
 where $c= 25(k-1)^2$. 
Applying \Cref{thm:determinant} and Lemma \ref{lem:moments} to each summand of the right-hand side of the inequality, one can conclude that
 \begin{align*}
\Eof{\left(\rho^{(k)}_2(v,t)-\rho^{(k)}_1(v,t)\right)^2}
&{}\leqslant c_1\gamma_4(\mu^{(1)},\mu^{(2)})^2+c_2\int^t_0\Eof{\left(\rho^{(k)}_2(v,s)-\rho^{(k)}_1(v,s)\right)^2}\differential{s},
\end{align*}
for some positive constants $c_1,c_2.$ 
Applying Gr\"{o}nwall's inequality, we finish the proof of the theorem. 
\end{proof}

\subsection{Self-intersection local times}
Let $\{u(t),\ t\in[0,1]\}$ be a $(2,\zeta)$ locally nodeterministic Volterra Gaussian process in $\mathbb{R}^d$ with $\zeta\in(0,\frac{2}{d}).$ Let $\mu$ be the occupation measure of the process $u$. Note that $\mu$ is a random element of $\cM_m$ for any $m\geqslant 1.$ Consider the stochastic differential equation with interaction \eqref{eq:Eq_interaction} with the initial measure $\mu_0=\mu$ and assume that $u$ and $W$ are independent. Suppose that coefficients of \eqref{eq:Eq_interaction} satisfy the conditions of  \Cref{thm:determinant}, then there exists a version of $x$ such that \eqref{eq:Eq_interaction} is a stochastic flow of diffeomorphisms and $\mu_t$ is also a random element of $\cM_m$ for any $m\geqslant 1$ (see \cite[Lemma 1.2.4]{dorogovtsev2023measure}). Moreover, $\mu_t$ is the occupation measure of the stochastic process $\{x(u(s),t),\ s\in[0,1]\}.$ The main aim of this section is to prove the existence of random variable
$$
T^{x(u,t)}_k=\int_{\Delta_k}\prod^{k-1}_{i=1}\delta_0(x(u(s_{i+1},t))-x(u(s_{i},t)))\,\differential{s}_1\cdots \differential{s}_k
$$
for each $k\geqslant 2$ and $t>0.$ 
Consider the following approximations for $T^{x(u,t)}_k$
\begin{align*}
T^{x(u,t)}_{\ve,k}
=\int_{\Delta_k}\frac{1}{\mid \det\ \D x(u(s_1),t)\mid^{k-1}}\prod^{k-1}_{i=1}f_{\ve}(u(s_{i+1})-u(s_{i}))\,\differential{s}_1\cdots \differential{s}_k.
\end{align*}
To prove the existence of weighted self-intersection local times for the process
$\{x(u(s),t),\ s\in[0,1]\}$, we first approximate the occupation measure of the process $u$ by a sequence of discrete measures defined as follows
$$
\mu^{(n)}(\cdot)=\frac{1}{n}\sum^n_{k=1}\delta_{u(\frac{k}{n})}(\cdot).
$$
The measures $\mu$ and $\mu^{(n)}$ are random elements of $\cM_m$. 
 \begin{lemma}
\label{lem:approximations of occup measure} 
The sequence of random measures $\{\mu^{(n)} :  n\ge 1\}$ converges to $\mu$ in the metric space $(\cM_m, \gamma_m)$  as $n \to \infty$  almost surely and in $L^m(\Omega, \cF, \prob)$, for all $m\ge 1$. 
\end{lemma}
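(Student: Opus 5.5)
The plan is to build an explicit coupling between $\mu^{(n)}$ and $\mu$ out of the time parametrisation, so that everything reduces to the modulus of continuity of $u$. Let $\tau$ be uniform on $[0,1]$ and set $\tau_n = \lceil n\tau\rceil/n$. For fixed $\omega$, the law of the pair $(u(\tau_n),u(\tau))$ is a coupling in $C(\mu^{(n)},\mu)$: the law of $u(\tau)$ is $\mu$ by Definition~\ref{def:Occupation measure}, and $\tau_n$ is uniform on $\{1/n,\ldots,1\}$. Hence
\begin{align*}
\gamma_m(\mu^{(n)},\mu)^m\leqslant \int_0^1\norm{u(\lceil ns\rceil/n)-u(s)}^m\differential{s}
=\sum_{j=1}^n\int_{(j-1)/n}^{j/n}\norm{u(j/n)-u(s)}^m\differential{s}.
\end{align*}
We also record that $\mu,\mu^{(n)}\in\cM_m$ almost surely, since both are supported in the compact set $u([0,1])$ because $u$ is continuous.

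For almost sure convergence, fix $\omega$ in the full-measure set on which $u$ is continuous. Then $u$ is uniformly continuous on $[0,1]$, so
\[
\gamma_m(\mu^{(n)},\mu)\leqslant \omega_u(1/n)\coloneq\sup_{|t-s|\leqslant 1/n}\norm{u(t)-u(s)}\to 0 .
\]
This bound holds for every $m$ simultaneously.

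For convergence in $L^m$, take expectations in the displayed bound and use the moment estimate $\E\norm{u(t)-u(s)}^{m}\leqslant c_m|t-s|^{m/2}$, which follows from \eqref{eq:Lip} together with Gaussianity. This gives
\[
\E\,\gamma_m(\mu^{(n)},\mu)^m\leqslant c_m\, n\int_0^{1/n} r^{m/2}\differential{r}\leqslant c_m' n^{-m/2}\to0 .
\]
For $m<2$, either the same Gaussian moment bound applies directly, or one uses Jensen's inequality to pass to a larger exponent. As an alternative route to $L^m$, note that $\gamma_m(\mu^{(n)},\mu)\leqslant \omega_u(1/n)\leqslant 2\sup_t\norm{u(t)}$, and $\sup_t\norm{u(t)}$ has all moments by Fernique's theorem, so dominated convergence also yields the $L^m$ statement from the almost sure one.

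No step here is a real obstacle. The only point needing care is the measurability of $\omega\mapsto\gamma_m(\mu^{(n)},\mu)$, which is what makes these random variables. This follows from the continuity of $\gamma_m$ on $\cM_m$ together with the measurability of the maps $\omega\mapsto\mu,\mu^{(n)}$, which are pushforwards of Lebesgue measure, respectively of the uniform measure on grid points, by a continuous random path. Everything else is contained in the explicit coupling above.
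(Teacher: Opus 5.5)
Your proof is correct and follows the paper's route. The paper's entire argument is the same time-parametrisation coupling bound $\gamma_m(\mu,\mu^{(n)})^m\leqslant\sum_j\int\|u(\cdot)-u(s)\|^m\differential{s}$ combined with continuity of $u$, and you flesh this out. Your right-endpoint version $u(\lceil ns\rceil/n)$ in fact matches the definition of $\mu^{(n)}$, whose atoms are at $u(k/n)$ for $k=1,\ldots,n$, better than the paper's left-endpoint sum. Your Fernique plus dominated-convergence route to the $L^m$ statement is a worthwhile addition, since it needs only continuity and Gaussianity of $u$ rather than the kernel condition \eqref{eq:Lip}.
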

\begin{proof}[Proof of \Cref{lem:approximations of occup measure}]
The proof follows from the estimate
$
 \gamma_m(\mu,\mu^{(n)})^m\leqslant\sum^{n-1}_{k=0} \int^{\frac{k+1}{n}}_{\frac{k}{n}}\norm{u\left(\frac{k}{n}\right)-u(s)}^m\differential{s}
$
and the continuity of the process $u.$
\end{proof}
Let $x_n$ be the solution to  the equation with interaction \eqref{eq:Eq_interaction} with the initial measure $\mu_0=\mu^{(n)}$. Then $\mu^{(n)}_t (\cdot)=\frac{1}{n}\sum^n_{k=1}\delta_{x_n(u(\frac{k}{n}),t)}(\cdot).$ Suppose that the coefficients of \eqref{eq:Eq_interaction} satisfy the conditions of \Cref{thm:determinant}, then there exists a version of $x_n$ such that \eqref{eq:Eq_interaction} is a stochastic flow of diffeomorphisms. Let us consider the existence of random variable
$$
T^{x_n(u,t)}_k=\int_{\Delta_k}\prod^{k-1}_{i=1}\delta_0(x_n(u(s_{i+1},t))-x_n(u(s_{i},t)))\differential{s}_1\cdots \differential{s}_k
$$
for each $k\geqslant 2$ and $t>0.$ Consider the approximations
$$
T^{x_n(u,t)}_{\ve,k}=\int_{\Delta_k}\frac{1}{|\det\ \D x_n(u(s_1),t)|^{k-1}}\prod^{k-1}_{i=1}f_{\ve}(u(s_{i+1})-u(s_{i}))\differential{s}_1\cdots \differential{s}_k.
$$

  \begin{theorem}
\label{thm:SILT_V_flow_n} Let $\{u(t),\ t\in[0,1]\}$ be a $(2,\zeta)$-locally nondeterministic Volterra Gaussian process in $\mbR^d$ with $\zeta\in(0,\frac{d}{2})$.
Then, the collection of random variables $\{T^{x_n(u,t)}_{\ve,k} : \ve >0\}$ converges in $L^2(\Omega, \cF, \prob)$ to the random variable $T^{x_n(u,t)}_k$ as $\ve \to 0$, and the following formula holds: 
$$
T^{x_n(u,t)}_k=T^u_k\left(\frac{1}{\lvert \det \D x_n \rvert^{k-1}}\right),\quad 
 \text{for all}\  k\geqslant 2.
$$
\end{theorem}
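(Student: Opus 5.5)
The plan is to reduce the statement to the Cauchy-in-$L^2$ argument from the proof of \Cref{thm:Weighted_SILT}, working conditionally on $u$ and using independence of $u$ and $W$. I read the hypothesis as $\zeta\in(0,\frac{2}{d})$, as in \Cref{thm:Weighted_SILT_V}, since that is what the Gram-determinant bound \eqref{eq:gram} needs.

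Write $\rho_n(v)=|\det\D x_n(v,t)|^{-(k-1)}$. By definition, $T^{x_n(u,t)}_{\ve,k}=T^u_{\ve,k}(\rho_n)$. So once the $L^2$-limit exists, the identity $T^{x_n(u,t)}_k=T^u_k(\rho_n)$ holds by the definition of the weighted self-intersection local time, and the real content is existence of the limit. The difficulty is that $\rho_n$ is random and does not satisfy the deterministic growth bound \eqref{eq:condition_on_weight_function}. It also depends on $u$, but only through the finitely many values $z=(u(\frac1n),\dots,u(1))$ that determine the initial measure $\mu^{(n)}$. Hence I would condition on $u$. For fixed $z\in\mbR^{dn}$, let $x^z$ be the flow with deterministic initial measure $\frac1n\sum_j\delta_{z_j}$, and define
\begin{align*}
g(z,y,y')=\Eof{\frac{1}{|\det\D x^{z}(y,t)|^{k-1}}\,\frac{1}{|\det\D x^{z}(y',t)|^{k-1}}}.
\end{align*}
Because $u$ and $W$ are independent, Fubini gives
\begin{align*}
\Eof{T^{x_n(u,t)}_{\ve_1,k}T^{x_n(u,t)}_{\ve_2,k}}=\int_{\Delta_k^2}\Eof{g(z,u(s_1),u(s_{k+1}))\prod_{i=1}^{k-1}f_{\ve_1}(u(s_{i+1})-u(s_i))\prod_{i=k+1}^{2k-1}f_{\ve_2}(u(s_{i+1})-u(s_i))}\differential{s}.
\end{align*}

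The next step is to show that $g$ is bounded and continuous. Boundedness follows from Cauchy--Schwarz and \Cref{lem:moment estimate}, provided its constant is uniform over deterministic initial measures. I would check this by inspecting the explicit exponential formula in \Cref{thm:determinant}: the bound on $\D a$ and $\D b$ is uniform in $\mu$, so the constant is too. Continuity in $(y,y')$ comes from the flow of diffeomorphisms being $C^1$ in $v$. Continuity in $z$ comes from \Cref{thm:detrminant difference estimate}, since $\gamma_4$ between the discrete measures is at most $\max_j\|z_j-z'_j\|$; uniform integrability from the higher moments then lets me pass to the expectation.

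With $g$ bounded and continuous, I would repeat the proof of \Cref{thm:Weighted_SILT} with the Gaussian vector enlarged by the $n$ fixed grid values. First condition on $(z,u(s_1),u(s_{k+1}))$ and integrate out the remaining points through the Gram--Schmidt/Plancherel bound. The resulting density-type bound involves $G\bigl(u_1(\tfrac1n),\dots,u_1(1),u_1(s_1),\dots,u_1(s_{2k})\bigr)^{-d/2}$. By the $(2,\zeta)$-local nondeterminism argument in the proof of \Cref{thm:Weighted_SILT_V}, this is bounded by a product of gaps to the power $-\zeta d/2$ between consecutive points of the ordered set $\{s_i\}\cup\{j/n\}$. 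This product is integrable over $\Delta_k^2$ because $\zeta d/2<1$ and the grid points are fixed. Dominated convergence then gives a common limit of $\Eof{T_{\ve_1}T_{\ve_2}}$ as $\ve_1,\ve_2\to0$, independent of the order, which yields the Cauchy property in $L^2$.

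I expect the main obstacle to be this Gram bound with the extra fixed grid points. Coincidences $s_i=j/n$ produce the singular factors, and the $\ve$-smoothing must be handled while $g$ carries the dependence on $z$. If the enlarged Gram estimate proves awkward, the fallback is to avoid conditioning on $z$ at all: bound $|g|\le K$ uniformly, obtain the Cauchy property from convergence of the $\ve$-integrals only, and use continuity of $g$ solely to identify the limit.
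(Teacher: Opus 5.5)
Your proposal is correct and follows essentially the same proof as the paper. Both run the Gram--Schmidt procedure on $u(\frac1n),\dots,u(1),u(s_1),u(s_{k+1}),u(s_2),\dots,u(s_{2k})$, bound the inner integrals by Plancherel, control $\E_W$ of the product of the two random weights uniformly via Cauchy--Schwarz and \Cref{lem:moment estimate}, and use $(2,\zeta)$-local nondeterminism to show that the enlarged Gram-determinant integral is finite with $\zeta\in(0,\frac{2}{d})$; your reading of the hypothesis is what the paper's proof actually uses. The continuity of $g$ that you establish is not needed: the weight depends only on the conditioned variables $(z,u(s_1),u(s_{k+1}))$, and the $\ve$-limit is taken pointwise in those variables.
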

\begin{proof}[Proof of Theorem~\ref{thm:SILT_V_flow_n}] To prove the theorem we will apply the same strategy used in the proof of Theorem \ref{thm:Weighted_SILT}. The main difference now is  that the weight function is random and depends on a finite number of values of the process $u.$ For readers' convenience we repeat the main steps of proof of Theorem \ref{thm:Weighted_SILT} highlighting new features related to random weights.
To prove the statement, we  check that
$
\lim_{\ve\to0}\Eof{\left(T^{x_n(u,t)}_{\ve,k}\right)^2}<\infty.
$
Let 
$
\tilde{u}\left(\frac{1}{n}\right),\ldots,\tilde{u}(1),\tilde{u}(s_1),\ \tilde{u}(s_{k+1}),\ \tilde{u}(s_2),\ldots,\ \tilde{ u}(s_{2k})
$
 be the orthogonal system of elements in $L^2(\Omega,\cF,\prob)$ obtained from 
 $
 u\left(\frac{1}{n}\right),\ldots,\ u(1),\ u(s_1),\ u(s_{k+1}),\ u(s_2),\ldots,\ u(s_{2k})
 $
 via the Gram--Schmidt orthogonalisation procedure. 
\begin{align*}
\Eof{\left(T^{x_n(u,t)}_{\ve,k}\right)^2}&{}=\int_{\Delta^2_k}\E_W \E_{\tilde{u}\left(\frac{1}{n}\right),\ldots,\tilde{u}(1)} \E_{\tilde{u}(s_1), \tilde{u}(s_2), \ldots, \tilde{u}(s_{2k})}\frac{1}{|\det\ \D x_n(\tilde{u}(s_1)+a_{s_1}(\tilde{u}\left(\frac{1}{n}\right),\ldots,\tilde{u}(1)),t)|^{k-1}}\\
&{}\quad\quad  \times\frac{1}{|\det\ \D x_n(\tilde{u}(s_{k+1})+a_{s_{k+1}}(\tilde{u}\left(\frac{1}{n}\right),\ldots,\tilde{u}(1))+a(s_{k+1},s_1)\tilde{u}(s_1),t)|^{k-1}}\\
&{}\quad \quad \times\prod^{2k-1}_{i=1,i\neq k}f_{\ve}\left(\tilde{u}(s_{i+1})-Q_{\frac{1}{n},\ldots,s_{i+1}}\left(\tilde{u}\left(\frac{1}{n}\right),\ldots,\tilde{u}(s_{i})\right)\right)\differential{s_1}\cdots \differential{s_{2k}}\\
&{}=\int_{\Delta^2_k}\E_W \E_{\tilde{u}\left(\frac{1}{n}\right),\ldots,\tilde{u}(1)}\int_{\mbR^{2kd}}\frac{1}{|\det\ \D x_n(y_1+a_{s_1}(\tilde{u}\left(\frac{1}{n}\right),\ldots,\tilde{u}(1)),t)|^{k-1}}\\
&{}\quad\quad  \times\frac{1}{|\det\ \D x_n(y_{k+1}+a_{s_{k+1}}(\tilde{u}\left(\frac{1}{n}\right),\ldots,\tilde{u}(1))+a(s_{k+1},s_1)y_1,t)|^{k-1}}\\
&{}\quad\quad  \times\prod^{2k-1}_{i=1,i\neq k}f_{\ve}\left(y_{i+1}-Q_{\frac{1}{n},\ldots,s_{i+1}}\left(\tilde{u}\left(\frac{1}{n}\right),\ldots,y_{i}\right)\right)
\prod^{2k}_{i=1}\tilde{p}_{s_i}(y_i)\differential{y_1}\cdots \differential{y_{2k}}\differential{s_1}\cdots\differential{s_{2k}},
\end{align*}
where 
$
a_{s_j}\left(\tilde{u}\left(\frac{1}{n}\right),\ldots,\tilde{u}(1)\right)=\sum^{n}_{i=1}a\left(s_j,\frac{i}{n}\right)\tilde{u}\left(\frac{i}{n}\right).
$
Note that
  \begin{align*}
&{}\int_{\mathbb{R}^{2kd-2d}}\prod^{2k-1}_{i=1,i\neq k}f_{\ve}\left(y_{i+1}-Q_{\frac{1}{n},\ldots,s_{i+1}}\left(\tilde{u}\left(\frac{1}{n}\right),\ldots,y_{i}\right)\right)
\prod^{2k}_{i=2,\ i\neq k+1}\tilde{p}_{s_i}(y_i)\differential{y_2}\cdots \differential{y_{2k}}\to \tilde{p}_{s_1\ldots s_{2k}}(y_1,y_{k+1}),
    \end{align*}
as $\ve\to0,$ where
\begin{align*}
\tilde{p}_{s_1\ldots s_{2k}}(y_1,y_{k+1})&{}=\tilde{p}_{s_2}\left(Q_{\frac{1}{n},\ldots,1,s_1,s_{k+1},s_2}\left(\tilde{u}\left(\frac{1}{n}\right),\ldots,\tilde{u}(1),y_1,y_{k+1}\right)\right)\times\ldots\\
&{}\quad \quad \times \tilde{p}_{s_{2k}}\left(Q_{\frac{1}{n},\ldots,1,s_1,s_{k+1},s_2\ldots s_{2k}}\left(\tilde{u}\left(\frac{1}{n}\right),\ldots,\tilde{u}(1),y_1,y_{k+1},\ldots\right)\right).
\end{align*}
Now, repeating the same arguments as in the proof of \Cref{thm:Weighted_SILT}, we conclude that
 \begin{align*}
&{} \int_{\mathbb{R}^{2kd-2d}}\prod^{2k-1}_{i=1,i\neq k}f_{\ve}\left(y_{i+1}-Q_{\frac{1}{n},\ldots,s_{i+1}}\left(\tilde{u}\left(\frac{1}{n}\right),\ldots,y_{i}\right)\right)
\prod^{2k}_{i=2,\ i\neq k+1}\tilde{p}_{s_i}(y_i)\differential{y_1}\cdots \differential{y_{2k}}
\leqslant \frac{c}{\prod^{2k}_{i=2,\ i\neq k+1}(\Eof{\tilde{u}_1(s_{i})^2})^{\frac{d}{2}}}.
 \end{align*}  
It follows from the Cauchy inequality and \Cref{lem:moment estimate} that
\begin{align*}
&{} \sup_{y_1,y_{k+1}\in\mbR^d}\E_W \E_{\tilde{u}\left(\frac{1}{n}\right),\ldots,\tilde{u}(1)}\sup_{0\leqslant t\leqslant T}\frac{1}{\lvert\det \D x_n(y_1+a_{s_1}(\tilde{u}\left(\frac{1}{n}\right),\ldots,\tilde{u}(1)),t)\rvert^{k-1}}\\
&{}\quad \quad \quad \times\frac{1}{\lvert\det \D x_n(y_{k+1}+a_{s_{k+1}}(\tilde{u}\left(\frac{1}{n}\right),\ldots,\tilde{u}(1))+a(s_{k+1},s_1)y_1,t)\rvert^{k-1}}
\leqslant C,
\end{align*}
where $C$ is some positive constant. Moreover,  $(2,\zeta)$-local nondeterminism of the process $u$ with $\zeta\in(0,\frac{2}{d})$ allows to conclude that
$$
\int_{\Delta^2_k}\frac{1}{G(u_1(\frac{1}{n}),\ldots,u_1(1),u_1(s_1),\ldots,u_1(s_{2k}))^{\frac{d}{2}}}\differential{s_1}\cdots \differential{s_{2k}}<\infty,
$$
which finishes the proof of the theorem.
\end{proof}

We are now ready to state and prove one of the main results of the paper. The following theorem shows that the self-intersection local times for the process $\{x(u(s),t),\ s\in[0,1]\}$ can be defined as the limit in mean square of random variables $T^{x_n(u,t)}_k, $ as $n \to \infty$. 
  \begin{theorem}
\label{thm:SILT_V_flow}
There exists a random variable $T^{x(u,t)}_k$ such that the sequence $\{T^{x_n(u,t)}_{k}: n \ge 1\}$ converges in mean-square to $T^{x(u,t)}_k$, i.e.,
$
T^{x(u,t)}_k = \llim{n\to\infty}T^{x_n(u,t)}_{k}, 
$
and the following formula holds
$$T^{x(u,t)}_k=T^u_k\left(\frac{1}{\lvert\det\ \D x\rvert^{k-1}}\right),\quad \text{for all}\  k\geqslant 2.$$
\end{theorem}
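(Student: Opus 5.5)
We will show that $\{T^{x_n(u,t)}_k : n\ge 1\}$ is Cauchy in $L^2(\Omega,\cF,\prob)$, and that its limit coincides with $T^u_k(\rho^{(k)})$, where $\rho^{(k)}(v,t)=|\det \D x(v,t)|^{-(k-1)}$. By \Cref{thm:SILT_V_flow_n}, each $T^{x_n(u,t)}_k$ equals $T^u_k(\rho^{(k)}_n)$ with $\rho^{(k)}_n = |\det \D x_n|^{-(k-1)}$. So it suffices to prove
$$
\Eof{\left(T^u_{\ve,k}(\rho^{(k)}_n)-T^u_{\ve,k}(\rho^{(k)})\right)^2}\leqslant \delta_n,
$$
with $\delta_n\to0$ uniformly in $\ve>0$, and then let $\ve\to0$. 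The plan is as follows.

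\textbf{Step 1: weighted SILT with the true random weight.} We first establish that $T^u_k(\rho^{(k)})$ exists as an $L^2$-limit of $T^u_{\ve,k}(\rho^{(k)})$. We repeat the proof of \Cref{thm:SILT_V_flow_n} with $x_n$ replaced by $x$. The weight now depends on the whole path of $u$ through $\mu$, not only on finitely many values, so we cannot condition on finitely many $\tilde u(i/n)$. Instead we proceed as follows:
\begin{itemize}[label={}]
\item apply Cauchy--Schwarz in the product of the two weights;
\item use the bound $\sup_v\Eof{\sup_{t\le T}|\rho(v,t)|^{2(k-1)}}<\infty$ from \Cref{lem:moment estimate}, which holds conditionally on $u$ since $W$ is independent of $u$ and the constant in that lemma does not depend on $\mu_0$;
\item bound the remaining Gaussian integral by the Gram-determinant estimate \eqref{eq:gram}, which is finite because $\zeta<2/d$.
\end{itemize}
Alternatively, Step 1 follows from Step 2 and completeness, once we know the $T^u_{\ve,k}(\rho_n^{(k)})$ converge uniformly in $\ve$.

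\textbf{Step 2: the uniform difference estimate (main step).} Write the second moment of the difference as an integral over $\Delta_k^2$ of
$$
\E\Big[(\rho^{(k)}_n-\rho^{(k)})(u(s_1),t)\,(\rho^{(k)}_n-\rho^{(k)})(u(s_{k+1}),t)\prod_{i\ne k} f_\ve(u(s_{i+1})-u(s_i))\Big].
$$
Condition on $u$ and take $\E_W$ first. Since $W$ is independent of $u$, \Cref{thm:detrminant difference estimate} applies with deterministic initial measures $\mu^{(1)}=\mu$ and $\mu^{(2)}=\mu^{(n)}$, frozen at the realisation of $u$. Together with Cauchy--Schwarz in $W$, this gives
$$
\E_W\big[(\rho^{(k)}_n-\rho^{(k)})(v,t)(\rho^{(k)}_n-\rho^{(k)})(v',t)\big]\leqslant c\,\gamma_4(\mu,\mu^{(n)})^2,
$$
uniformly in $v$ and $v'$. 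We then need
$$
\E\Big[\gamma_4(\mu,\mu^{(n)})^2\int_{\Delta_k^2}\prod f_\ve\Big]\leqslant \big(\E\gamma_4(\mu,\mu^{(n)})^4\big)^{1/2}\,\Big(\E\big(T^u_{\ve,k}\cdot T^u_{\ve,k}\big)^2\Big)^{1/2},
$$
where the second factor is the fourth moment of the unweighted approximations. That fourth moment is bounded uniformly in $\ve$ by \Cref{thm:Weighted_SILT_V} with $p=4$ and $\rho\equiv1$. The first factor tends to $0$ by \Cref{lem:approximations of occup measure}, using convergence in $L^4$ of $\gamma_4$. Hence $\delta_n\to0$.

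\textbf{Main obstacle.} The main difficulty is justifying the conditional use of \Cref{thm:detrminant difference estimate} and \Cref{lem:moments} with random initial measures. Those results are stated for deterministic $\mu^{(i)}$ with constants independent of the measures. The argument therefore requires:
\begin{itemize}[label={}]
\item the independence of $u$ and $W$;
\item a measurable dependence of the solution on $\mu_0$, so that conditioning on $u$ is legitimate.
\end{itemize}

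\textbf{Step 3: conclusion.} Combining the steps, and using the $L^2$ convergence of $T^u_{\ve,k}(\rho_n^{(k)})$ from \Cref{thm:SILT_V_flow_n} together with Step 1 for fixed $n$,
$$
\|T^{x_n(u,t)}_k-T^u_k(\rho^{(k)})\|_{L^2}\leqslant \limsup_{\ve\to0}\|T^u_{\ve,k}(\rho^{(k)}_n)-T^u_{\ve,k}(\rho^{(k)})\|_{L^2}\leqslant \delta_n^{1/2}\to0.
$$
This gives both the existence of the limit $T^{x(u,t)}_k$ and the change-of-variable formula $T^{x(u,t)}_k=T^u_k\big(|\det\D x|^{-(k-1)}\big)$.
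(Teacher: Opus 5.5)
Your argument is correct, provided Step 1 is obtained by your ``alternatively'' route, and it differs from the paper's in one structural respect.

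The paper never compares $x_n$ with $x$. It shows that $\{T^{x_n(u,t)}_k\}$ is Cauchy by bounding $\E(T^{x_n(u,t)}_{\ve,k}-T^{x_m(u,t)}_{\ve,k})^2$ by $c(\E\gamma_4(\mu^{(n)},\mu^{(m)})^4)^{1/2}(\E (T^u_{\ve,k})^4)^{1/2}$. This is exactly your Step 2 estimate with $\mu$ replaced by $\mu^{(m)}$. The paper then simply defines $T^{x(u,t)}_k$ as the $L^2$-limit. In that proof the identity with $T^u_k(|\det\D x|^{-(k-1)})$ is stated but not derived. Moreover, the existence of a weighted self-intersection local time with this random, path-dependent weight is not covered by Theorem~\ref{thm:Weighted_SILT_V}, which concerns deterministic weights.

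You instead compare $\rho^{(k)}_n$ directly with $\rho^{(k)}$ at fixed $\ve$, with a bound uniform in $\ve$. Combined with the $\ve$-convergence for each fixed $n$ from Theorem~\ref{thm:SILT_V_flow_n}, your interchange-of-limits argument does two things at once. It shows that $T^u_{\ve,k}(\rho^{(k)})$ is $L^2$-Cauchy as $\ve\to0$, and that its limit equals $\lim_n T^{x_n(u,t)}_k$. This is what actually proves the change-of-variable formula. It comes at no extra cost: the conditioning on $u$ you need, which you flag as the main obstacle, is the same one the paper uses implicitly when it applies Theorem~\ref{thm:detrminant difference estimate} to the random measures $\mu^{(n)},\mu^{(m)}$.

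The one thing to drop is the first route to Step 1. Cauchy--Schwarz in the weights, together with Lemma~\ref{lem:moment estimate} and \eqref{eq:gram}, gives only $\sup_\ve \E\, T^u_{\ve,k}(\rho^{(k)})^2<\infty$. It does not give convergence of $\E[T^u_{\ve_1,k}(\rho^{(k)})T^u_{\ve_2,k}(\rho^{(k)})]$ to a common limit. The Gaussian-conditioning computation of Theorem~\ref{thm:Weighted_SILT} that would identify that limit is unavailable, because $\rho^{(k)}$ depends on the whole path of $u$ through $\mu$ and not on $u(s_1),u(s_{k+1})$ alone. The $\ve$-uniform Step 2 combined with completeness is therefore the way to obtain existence here, and it is the version you should keep.
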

\begin{proof}[Proof of Theorem~\ref{thm:SILT_V_flow}] We will verify that 
$\Eof{\left(T^{x_n(u,t)}_{k}-T^{x_m(u,t)}_{k}\right)^2}\to 0,
$ as $n, m\to \infty$. 
Note that
$$
\Eof{\left(T^{x_n(u,t)}_{k}-T^{x_m(u,t)}_{k}\right)^2}=\lim_{\ve\to0}\Eof{\left(T^{x_n(u,t)}_{\ve,k}-T^{x_m(u,t)}_{\ve,k}\right)^2}.
$$
Moreover, see that 
 \begin{align*}
&{}\Eof{\left(T^{x_n(u,t)}_{\ve,k}-T^{x_m(u,t)}_{\ve,k}\right)^2}\\
&{}=\E\left(\int_{\Delta_k}\left(\frac{1}{\mid \det \D x_n(u(s_1),t)\mid^{k-1}}-\frac{1}{\mid \det  \D x_m(u(s_1),t)\mid^{k-1}}\right)
\prod^{k-1}_{i=1}f_{\ve}(u(s_{i+1})-u(s_i))\differential{s}_1\cdots \differential{s}_k\right)^2\\
&{}=\E_u\int_{\Delta^2_k}\E_W\left(\frac{1}{\mid \det \D x_n(u(s_1),t)\mid^{k-1}}-\frac{1}{\mid \det  \D x_m(u(s_1),t)\mid^{k-1}}\right)\\
&{}\quad \quad \times\left(\frac{1}{\mid \det \D x_n(u(s_{k+1}),t)\mid^{k-1}}-\frac{1}{\lvert \det  \D x_m(u(s_{k+1}),t)\rvert^{k-1}}\right)
\prod^{2k-1}_{i=1,i\neq k}f_{\ve}(u(s_{i+1})-u(s_i))\differential{s}_1\cdots \differential{s}_{2k}.
\end{align*}
Applying the Cauchy inequality,  \Cref{lem:moments}, and \Cref{thm:detrminant difference estimate},  we conclude that for each $\ve>0$,
\begin{align*}
&{}\Eof{\left(T^{x_n(u,t)}_{\ve,k}-T^{x_m(u,t)}_{\ve,k}\right)^2}
\leqslant c(\E_u\gamma_4(\mu^{(n)},\mu^{(m)})^4)^{\frac{1}{2}}
\left(\E_u\left(\int_{\Delta^2_k}\prod^{2k-1}_{i=1,i\neq k}f_{\ve}(u(s_{i+1})-u(s_i))\differential{s}_1\cdots \differential{s}_{2k}\right)^2\right)^{\frac{1}{2}}.
\end{align*}
It follows from  \Cref{thm:Weighted_SILT_V} that
$$
\lim_{\ve\to0}\E_u\left(\int_{\Delta^2_k}\prod^{2k-1}_{i=1,i\neq k}f_{\ve}(u(s_{i+1})-u(s_i))\differential{s}_1\cdots \differential{s}_{2k}\right)^2=\E_u \left(T_k^u\right)^4.
$$
Hence, we have 
\begin{align*}
&{}\Eof{\left(T^{x_n(u,t)}_{k}-T^{x_m(u,t)}_{k}\right)^2}
\leqslant c(\E_u\gamma_4(\mu^{(n)},\mu^{(m)})^4)^{\frac{1}{2}}\left(\E_u \left(T^u_k\right)^4\right)^{\frac{1}{2}}\to0,\ n,m\to\infty, 
\end{align*}
which completes the proof of the theorem.
\end{proof}


\section{Asymptotics of self-intersection local times} 
\label{sec:Asymptotics SILT}
\cite{baxendale1986isotropic} and \cite{dimitroff2006some} studied the evolution of geometric characteristics of smooth curves (length) under isotropic Brownian flows. 
Let us summarise some interesting results: If $L_t \coloneq \int^1_0\norm{\gamma^{\prime}_t(u)}\differential{u}$ is the length of the curve $\gamma_t \coloneq \phi_t\circ\gamma:[0,1]\to\mbR^d$, then it can be proved that the stochastic process 
$\{\exp\left(-\left(\theta+\frac{\kappa}{2}\right)t\right)L_t : t\ge 0\}$ is a martingale, which converges almost surely as $t\to \infty$, where 
$\phi_t$ is an isotropic Brownian flow, $\theta$ is the top Lyapunov exponent associated with the flow,  and $\kappa$ is the characteristic constant of the flow. See \cite{dimitroff2006some,le1985isotropic,baxendale1986isotropic,kunita1990stochastic} for details and precise definitions.

What happens if the curve is nonsmooth and random? Following Le Gall's approach, the self-intersection local times can be considered its geometric characteristics. The main aim of this section is to describe its asymptotics under the action of stochastic flows. To be more precise, we shall prove that $\{\exp\left((k-1)\hat{a}t-\frac{k(k-1)\hat{b}t}{2}\right)T^{x(u,t)}_k : t\ge 0\}$ is a positive, continuous square-integrable martingale for an appropriate choice of $\hat{a}$, and $\hat{b}$ (\Cref{thm:silt martingale}). We provide an explicit expression for its quadratic variation.


Let $u$ be the Volterra Gaussian process as before.  Consider the stochastic flow generated by the solution to the following stochastic differential equation with interaction
\begin{align}
\label{eq:special_eq_interaction}
\begin{cases}
\differential{x(v,t)}=a(x(v,t),\mu_t)\differential{t}+\int_{\mbR^d}b(x(v,t)-z)W(\differential{t},\differential{z})\\	
x(v,0)=v,\ v\in\mbR^d\\
\mu_t=\mu_0\circ x(\cdot,t)^{-1},\ t\geqslant 0.
\end{cases}
\end{align}
Assume that coefficients $a$ and $b$ with $b(x, \nu, z)\equiv b(x-z)$ satisfy the conditions of  \Cref{thm:existence_solution} and \Cref{thm:determinant}. 
Then,  $\{x(v,t):  v\in\mbR^d,\ t\in[0,\infty)\}$ is the stochastic flow of diffeomorphisms.
Note that if $a=0$ and $b(y)=\tilde{b}(\norm{y}),\ y\in\mbR^d,$ for some $\tilde{b}$, then the solution is the isotropic Brownian flow with covariance $(t_2\wedge t_1)\ b*b(y_2-y_1),$ where $b*b$ denotes the convolution of $b$ with itself. 

Let us define
$$
\hat{b}=\trace\ \int_{\mbR^d}\D b(z)\odot \D b(z)\differential{z}, 
$$
where $A\odot B$ denotes  the Hadamard product of matrices $A$ and $B$. 
The following theorem describes the asymptotics of the self-intersection local times of the stochastic process $\{x(u(s)): s \in [0, 1]\}$ as $t\to\infty.$ 
\begin{theorem}
\label{thm:asymptotics_expectations} Assume that $\trace\ \D a(v,\mu)=\hat{a}$ for all $v\in\mbR^d$ and $\mu\in\cM_m.$ Then, for all $k \ge 2$, 
$$
\lim_{t\to\infty}\exp\left((k-1)\hat{a}t-\frac{k(k-1)\hat{b}t}{2}\right)\Eof{T^{x(u,t)}_k}=\Eof{T^u_k}.
$$
\end{theorem}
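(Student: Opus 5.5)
The plan is to combine the change of variable formula of \Cref{thm:SILT_V_flow} with the explicit exponential representation of $\det \D x$ from \Cref{thm:determinant}. Under the assumptions of \Cref{thm:asymptotics_expectations}, this representation has a deterministic drift and a stochastic integral with deterministic quadratic variation. That should make $\Eof{T^{x(u,t)}_k}$ exactly equal to $\myExp{-(k-1)\hat{a}t+\frac{k(k-1)\hat{b}t}{2}}\Eof{T^u_k}$ for every $t$, so that the limit as $t\to\infty$ is trivial.

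\emph{Step 1 (structure of the determinant).} For the equation \eqref{eq:special_eq_interaction} we have $\D_v b(x(v,s)-z)=\D b(x(v,s)-z)$. Substituting $z\mapsto x(v,s)-z$ in the $\differential{z}$-integral and using translation invariance of Lebesgue measure gives
\begin{align*}
\trace\int_{\mbR^d}\D b(x(v,s)-z)\odot\D b(x(v,s)-z)\differential{z}=\hat{b},
\end{align*}
whatever $x(v,s)$ and $\mu_s$ are. Together with $\trace\,\D a\equiv\hat{a}$, \Cref{thm:determinant} yields
\begin{align*}
\det\D x(v,t)=\myExp{\hat{a}t-\tfrac{\hat{b}t}{2}+M_t(v)},\qquad M_t(v)=\trace\int_0^t\int_{\mbR^d}\D b(x(v,s)-z)W(\differential{s},\differential{z}).
\end{align*}
The term $-\frac12\hat{b}t$ in the proof of \Cref{thm:determinant} is exactly the It\^o correction of the exponential, so the quadratic variation of $M(v)$ is $\langle M(v)\rangle_t=\hat{b}t$. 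By the same translation argument this is deterministic and independent of $v$ and of $\mu_0$. Hence
\begin{align*}
\rho^{(k)}(v,t)=\lvert\det\D x(v,t)\rvert^{-(k-1)}=\myExp{-(k-1)\hat{a}t+\tfrac{(k-1)\hat{b}t}{2}-(k-1)M_t(v)}.
\end{align*}
Since $\langle M(v)\rangle$ is deterministic, $M_t(v)$ is centred Gaussian with variance $\hat{b}t$; Novikov's condition also holds trivially. Therefore
\begin{align*}
\Eof{\rho^{(k)}(v,t)}=\myExp{-(k-1)\hat{a}t+\tfrac{(k-1)\hat{b}t}{2}+\tfrac{(k-1)^2\hat{b}t}{2}}=\myExp{-(k-1)\hat{a}t+\tfrac{k(k-1)\hat{b}t}{2}}.
\end{align*}
This holds for every fixed $v$ and every deterministic initial measure.

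\emph{Step 2 (conditioning on $u$).} Since $u$ and $W$ are independent, conditionally on $u$ the initial measure $\mu_0=\mu$ is deterministic and $u(s_1)$ is a fixed point. Applying Step 1 conditionally, via a jointly measurable version of the flow and Fubini, gives
\begin{align*}
\E_W\,\rho^{(k)}(u(s_1),t)=\myExp{-(k-1)\hat{a}t+\tfrac{k(k-1)\hat{b}t}{2}}.
\end{align*}
The right-hand side is a constant, independent of $u$ and $s_1$. Hence
\begin{align*}
\Eof{T^{x(u,t)}_{\ve,k}}=\E_u\int_{\Delta_k}\E_W\,\rho^{(k)}(u(s_1),t)\prod_{i=1}^{k-1}f_\ve(u(s_{i+1})-u(s_i))\differential{s_1}\cdots\differential{s_k}=\myExp{-(k-1)\hat{a}t+\tfrac{k(k-1)\hat{b}t}{2}}\Eof{T^u_{\ve,k}}.
\end{align*}

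\emph{Step 3 (passing to the limit).} By \Cref{thm:SILT_V_flow}, $T^{x(u,t)}_k=T^u_k(\rho^{(k)}(\cdot,t))$, which is the $L^2$-limit of $T^{x(u,t)}_{\ve,k}$ as $\ve\to0$. This follows from the discretisation $x_n$ combined with \Cref{thm:SILT_V_flow_n}, or directly by the argument of \Cref{thm:Weighted_SILT} using \Cref{lem:moment estimate}. Also, $T^u_{\ve,k}\to T^u_k$ in $L^2$ by \Cref{thm:Weighted_SILT_V}. Convergence in $L^2$ implies convergence of expectations, so the identity of Step 2 passes to $\ve\to0$. Multiplying by $\myExp{(k-1)\hat{a}t-\frac{k(k-1)\hat{b}t}{2}}$ gives equality with $\Eof{T^u_k}$ for every $t$, and in particular in the limit $t\to\infty$.

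\emph{Main obstacle.} I expect the delicate point to be Step 2. There one must justify that, conditionally on $u$, the flow started from the random measure $\mu$ coincides with the flow for a deterministic initial measure evaluated at $\mu$. One also needs to evaluate it at the random point $u(s_1)$. This requires a measurable version of $(v,\mu_0,\omega)\mapsto x(v,t)$. I would first try to obtain this from the construction in \cite{dorogovtsev2023measure}, using the continuity in $\mu_0$ given by \Cref{lem:moments}. If that fails, I would do the computation for the discrete measures $\mu^{(n)}$, where the conditioning is finite-dimensional, and then pass to $n\to\infty$ using \Cref{thm:SILT_V_flow}.
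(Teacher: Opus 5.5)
Your proposal is correct and follows the paper's proof. The paper likewise uses \Cref{thm:determinant} to write $\exp\bigl((k-1)\hat{a}t-\frac{k(k-1)\hat{b}t}{2}\bigr)\lvert\det\D x(v,t)\rvert^{-(k-1)}$ as the mean-one exponential martingale $\cE_k(v,t)$, and by independence of $u$ and $W$ obtains $\exp\bigl((k-1)\hat{a}t-\frac{k(k-1)\hat{b}t}{2}\bigr)\Eof{T^{x(u,t)}_k}=\Eof{T^u_k}$ for every $t$, so the limit is trivial. You do differ in carrying out the computation on the $\ve$-approximations and passing to the limit in $L^2$, where the paper works formally with the delta functions. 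Also, the conditioning you flag as the main obstacle is handled directly by working in the filtration $\tilde{\cF}_t$: there $u(s_1)$ is $\tilde{\cF}_0$-measurable and the quadratic variation of the stochastic integral is still deterministic.
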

\begin{proof}[Proof of \Cref{thm:asymptotics_expectations}] 
It follows from \Cref{thm:determinant} that 
\begin{align*}
&{}\frac{1}{\lvert \det\ \D x(v,t)\rvert^{k-1}}
=\exp\left(-(k-1)\hat{a}t\frac{(k-1)\hat{b}t}{2}-(k-1)\trace\int^t_0\int_{\mbR^d}\D b(x(v,s)-z)W(\differential{s},\differential{z})\right).
\end{align*}
Note that the stochastic process
$$
\beta_k(v,t)\coloneq -(k-1)\trace\int^t_0\int_{\mbR^d}\D b(x(v,s)-z)W(\differential{s},\differential{z})
$$
is a continuous square integrable martingale with respect to 
$
\tilde{\cF}_t=\sigma\left(u(r), W(\Delta):  r\in[0,1],\ \Delta\in \cB\left([0,t]\times\mbR^d\right)\right),
$
which implies that
$$
\cE_k(v,t)\coloneq \myExp{\beta_k(v,t)-\frac{(k-1)^2\hat{b}t}{2}}, t\ge 0, v\in\mbR^d,
$$
is also a continuous square integrable martingale with respect to $\tilde{\cF}_t.$ Moreover,  $\Eof{\cE_k(v,t)}=1.$ 
Therefore, 
\begin{align*}
\exp\left((k-1)\hat{a}t-\frac{k(k-1)\hat{b}t}{2}\right)\Eof{T^{x(u,t)}_k}
&{}=\Eof{\int_{\Delta_k}\cE_k(u(s_1),t)\prod^{k-1}_{i=1}\delta_0(u(s_{i+1})-u(s_i))\differential{s_1}\cdots \differential{s_k}}\\
&{}=\E_u\int_{\Delta_k}\prod^{k-1}_{i=1}\delta_0(u(s_{i+1})-u(s_i))\differential{s_1}\cdots \differential{s_k},
\end{align*}
which finishes the proof of the theorem.
\end{proof}
\Cref{thm:asymptotics_expectations} describes the asymptotics of the mean  of the random variable $T^{x(u,t)}_k$. Our next aim is to study its almost sure asymptotics. To do this, let us introduce a random measure on $\mbR^d$ defined as follows
$$
\nu_k(A)\coloneq \int^1_{0}\cdots\int^1_{0}1_{A}(u(s_1))\prod^{k-1}_{i=1}\delta_0(u(s_{i+1})-u(s_i))\differential{s_1}\cdots \differential{s_k},\ A\in\cB(\mbR^d).
$$
Note that the existence of such a measure follows from  \Cref{thm:Weighted_SILT_V}. Moreover, it follows from Theorem \ref{thm:Weighted_SILT_V} that $\Eof{\nu_k(\mbR^d)^p}<\infty$  for each $p\geqslant 2.$ 
Then, define the stochastic process
\begin{align*}
  \begin{aligned}
  \tilde{T}^{x(u,t)}_k \coloneq \int^1_0\cdots\int^1_0\prod^{k-1}_{i=1}\delta_0(x(u(s_{i+1}),t)-x(u(s_i),t))\differential{s_1}\cdots\differential{s_k} &{} = \int_{\mbR^d}\frac{1}{\mid \D x(v,t)\mid^{k-1}}\nu_k(\differential{v}),\\
\text{and}\quad  \cT_k(t)\coloneq \exp\left((k-1)\hat{a}t-\frac{k(k-1)\hat{b}t}{2}\right)\tilde{T}^{x(u,t)}_k
&{}
=\int_{\mbR^d}\cE_k(v,t)\nu_k(\differential{v}).
  \end{aligned}
\end{align*}


The following lemma shows that the random process $\{\cT_k(t) : t\ge 0\}$ is, in fact, a square integrable martingale and describes its quadratic variation.
\begin{lemma}
\label{thm:silt martingale} The random process $\{\cT_k(t): t \geqslant 0\}$ is a positive continuous square integrable martingale with respect to $\tilde{\cF}_t$ with quadratic variation
\begin{align*}
&{}\langle\cT_k\rangle(t)=(k-1)^2\int_{\mbR^d}\int_{\mbR^d}\int^t_0\cE_k(v_1,s)\cE_k(v_2,s)
\left(\sum^d_{i=1}\D b_{ii}*\D b_{ii}(x(v_2,s)-x(v_1,s))\right)\differential{s} \nu_k(\differential{v_1})\nu_k(\differential{v_2}).
\end{align*}
\end{lemma}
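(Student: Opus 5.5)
The plan is to use the representation $\cT_k(t)=\int_{\mbR^d}\cE_k(v,t)\,\nu_k(\differential{v})$. Here $\nu_k$ is $\tilde{\cF}_0$-measurable, since it depends only on $u$, and $u$ is independent of $W$. Each $\cE_k(v,\cdot)$ is a Doléans-Dade exponential. By Itô's formula,
\begin{align*}
\cE_k(v,t)=1-(k-1)\int^t_0\int_{\mbR^d}\cE_k(v,s)\,\trace\,\D b(x(v,s)-z)\,W(\differential{s},\differential{z}).
\end{align*}
Integrating against $\nu_k(\differential{v})$ and applying a stochastic Fubini theorem for martingale measures (Walsh), I expect
\begin{align*}
\cT_k(t)=\nu_k(\mbR^d)-(k-1)\int^t_0\int_{\mbR^d}\left(\int_{\mbR^d}\cE_k(v,s)\,\trace\,\D b(x(v,s)-z)\,\nu_k(\differential{v})\right)W(\differential{s},\differential{z}).
\end{align*}
This exhibits $\cT_k$ as a continuous local martingale with respect to $\tilde{\cF}_t$. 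Positivity is immediate, because $\cE_k>0$ and $\nu_k$ is a nonnegative measure, which is a limit of nonnegative approximations. Continuity in $t$ follows from the stochastic integral representation.

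To justify Fubini and upgrade to a square-integrable martingale, I will bound the second moments. The Lipschitz and boundedness assumptions of \Cref{thm:determinant} give $\int_{\mbR^d}\|\D b(z)\|^2_{HS}\,\differential{z}\le C$. Hence $\Eof{\cE_k(v,s)^2\mid \tilde{\cF}_0}\le e^{c s}$, uniformly in $v$; this is the same as \Cref{lem:moment estimate}, or follows directly since the quadratic variation of $\beta_k(v,\cdot)$ is deterministic, equal to $(k-1)^2\hat b s$. Then Cauchy--Schwarz in $\nu_k\otimes\nu_k$, followed by conditioning on $u$, gives
\begin{align*}
\Eof{\sup_{s\le t}\cT_k(s)^2}\le c_t\,\Eof{\nu_k(\mbR^d)^2}<\infty,
\end{align*}
using $\Eof{\nu_k(\mbR^d)^p}<\infty$ from \Cref{thm:Weighted_SILT_V}. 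The same bound makes the integrand in the Walsh integral square-integrable over $[0,t]\times\mbR^d\times\Omega$, which is the integrability condition for stochastic Fubini. So $\cT_k$ is a true square-integrable martingale.

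For the quadratic variation I will use the isometry of the Brownian sheet:
\begin{align*}
\langle\cT_k\rangle(t)=(k-1)^2\int^t_0\int_{\mbR^d}\left(\int_{\mbR^d}\cE_k(v,s)\,\trace\,\D b(x(v,s)-z)\,\nu_k(\differential{v})\right)^2\differential{z}\,\differential{s}.
\end{align*}
I will expand the square as a double integral in $(v_1,v_2)$ and apply ordinary Fubini, which is justified by the same moment bounds. The $z$-integral then becomes
\begin{align*}
\int_{\mbR^d}\trace\,\D b(x(v_1,s)-z)\;\trace\,\D b(x(v_2,s)-z)\,\differential{z}.
\end{align*}
Substituting $z\mapsto x(v_1,s)-z$ turns this into a correlation-type convolution of the entries $\D b_{ii}$ evaluated at $x(v_2,s)-x(v_1,s)$. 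To land exactly on the stated $\sum_i \D b_{ii}*\D b_{ii}$, I would use the trace structure paired with the Hadamard form implicit in $\hat b$, and possibly a symmetry of $b$, so that only the diagonal terms survive.

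The main obstacle is this last identification. Cross terms $\D b_{ii}\star\D b_{jj}$ with $i\neq j$ do not vanish in general, and the reflection $z\mapsto -z$ turns a correlation into a convolution only under symmetry of $b$. I would first check whether the componentwise (Hadamard) interpretation of $\trace\int \D b\,W(\differential{s},\differential{z})$ already used in \Cref{thm:determinant} makes the cross terms vanish. If it does not, I would state the quadratic variation in the general correlation form, which reduces to the displayed formula under the stated conventions. The rigorous stochastic Fubini step is the secondary technical point.
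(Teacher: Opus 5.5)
Your proposal is correct and is essentially the paper's argument. $\cT_k$ is a $\nu_k$-mixture of the exponential martingales $\cE_k(v,\cdot)$ with $\nu_k$ being $\tilde{\cF}_0$-measurable, and $\langle\cT_k\rangle(t)$ is obtained by integrating $\cE_k(v_1,s)\cE_k(v_2,s)\,\differential{\langle\beta_k(v_1,\cdot),\beta_k(v_2,\cdot)\rangle(s)}$ against $\nu_k\otimes\nu_k$. Your stochastic Fubini step, the isometry and the moment bounds simply justify the interchange that the paper performs without comment.

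Your worry about cross terms resolves in favour of the stated formula. The componentwise convention of \Cref{thm:determinant} has row $i$ of $\D x$ driven by its own independent noise, so that $\trace\int\D b\,W(\differential{s},\differential{z})=\sum_i\int\D b_{ii}\,W_i(\differential{s},\differential{z})$. This is precisely what gives $\langle\beta_k(v,\cdot)\rangle(s)=(k-1)^2\hat b s$, which you already use. Under this convention only the diagonal terms $\int_{\mbR^d}\D b_{ii}(x(v_1,s)-z)\,\D b_{ii}(x(v_2,s)-z)\,\differential{z}$ survive. The paper's $*$ denotes this correlation, which coincides with the convolution when $\D b_{ii}$ is even.
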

\begin{proof}[Proof of \Cref{thm:silt martingale}]
The positivity and the martingale property follows from the definition of $\cT_k$ and the measurability of the random measure $\nu_k$ with respect to $\tilde{\cF}_0\subset\tilde{\cF}_t.$ Moreover, 
$\Eof{\cT_k(t)^2}<\infty$ for each $t\geqslant 0.$
Note that
\begin{align*}
\langle\cT_k\rangle(t)&{}=\int_{\mbR^d}\int_{\mbR^d}\langle\cE_k(v_1,\cdot)\cE_k(v_2,\cdot)\rangle(t)\nu_k(\differential{v_1})\nu_k(\differential{v_2})
\\ 
&{}
=\int_{\mbR^d}\int_{\mbR^d}\int^t_0\cE_k(v_1,s)\cE_k(v_2,s)\differential{\langle\beta_k(v_1,\cdot)\beta_k(v_2,\cdot) \rangle(s)}\nu_k(\differential{v_1})\nu_k(\differential{v_2}).
\end{align*}
This completes the proof since  $\langle \beta_k(v_1,\cdot)\beta_k(v_2,\cdot)\rangle(t) = (k-1)^2 \sum^d_{i=1}\int^t_0\D b_{ii} * \D b_{ii}(x(v_2,s)-x(v_1,s))\differential{s}.$
\end{proof}

Since the process $\{\cT_k(t) : t\ge 0\}$ is a continuous square-integrable martingale, by \cite[Chapter V, Proposition 1.8, p. 183]{RevuzYor1999CMBMbook}, the two sets $\{\lim_{t\to \infty} \cT_k(t) \}$ and $\{ \langle\cT_k\rangle(\infty)\equiv \lim_{t\to \infty} \langle\cT_k\rangle(t)  < \infty\}$ are almost surely equal. Therefore, when $\trace\ \D a(v,\mu)=\hat{a}$ for all $v\in\mbR^d$ and $\mu\in\cM_m$, we conclude 
\begin{align}
\label{eq:silt asymptotics a.s.}
\lim_{t\to\infty}\exp\left((k-1)\hat{a}t-\frac{k(k-1)\hat{b}t}{2}\right)\tilde{T}^{x(u,t)}_k\in[0,\infty)\ \text{a.s.}
\end{align}
on the event $\{ \lim_{t\to \infty} \langle\cT_k\rangle(t)  < \infty\}$. The following lemma provides an estimate on $\langle\cT_k\rangle$.

\begin{lemma}
\label{thm:characteristics estimate} There exists a positive $c$ such that
$
 \limsup_{t\to\infty}\myExp{\frac{-3(k-1)^2\hat{b}t}{2}}\Eof{\langle\cT_k\rangle(t)}\leq c
$
for all $k\ge 2$. 
\end{lemma}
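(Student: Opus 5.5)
We start from the explicit formula for the quadratic variation in \Cref{thm:silt martingale}. Taking expectations and using Fubini's theorem, we get
\begin{align*}
\Eof{\langle\cT_k\rangle(t)}=(k-1)^2\int^t_0\E\int_{\mbR^d}\int_{\mbR^d}\cE_k(v_1,s)\cE_k(v_2,s)\, h\big(x(v_2,s)-x(v_1,s)\big)\,\nu_k(\differential{v_1})\nu_k(\differential{v_2})\,\differential{s},
\end{align*}
where $h=\sum^d_{i=1}\D b_{ii}*\D b_{ii}$. By the Cauchy--Schwarz inequality in $L^2(\mbR^d)$, $|h(y)|\leqslant\sum_i\norm{\D b_{ii}}^2_{L^2}\leqslant \hat{b}$. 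This holds because the trace of $\D b\odot\D b$ is $\sum_i(\D b_{ii})^2$, so $\hat{b}=\sum_i\int(\D b_{ii})^2$. The first step is therefore to drop $h$ and bound the integrand by $\hat{b}\,\cE_k(v_1,s)\cE_k(v_2,s)$. Since $\cE_k>0$, no sign issues arise.

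Next we condition on $u$, which is independent of $W$ and determines $\nu_k$. We use $\tilde{\cF}_0$-measurability of $\nu_k$ and apply Fubini once more. It then suffices to bound $\E_W\big(\cE_k(v_1,s)\cE_k(v_2,s)\big)$ uniformly in $v_1,v_2$. Two routes are available. The crude one is Cauchy--Schwarz: $\E\,\cE_k(v_1,s)\cE_k(v_2,s)\leqslant (\E\cE_k(v_1,s)^2)^{1/2}(\E\cE_k(v_2,s)^2)^{1/2}$. Since $\beta_k(v,\cdot)$ is a continuous martingale with deterministic bracket $\langle\beta_k(v,\cdot)\rangle(s)=(k-1)^2\hat{b}s$ (again because $\trace$ of $\D b(x-z)$ integrated in $z$ squared gives $\hat b$ by translation invariance), we have $\cE_k(v,s)^2=\exp(2\beta_k-4\cdot\tfrac{(k-1)^2\hat b s}{2})\exp((k-1)^2\hat{b}s)$. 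The first factor is a stochastic exponential with expectation $1$, so $\E\cE_k(v,s)^2=e^{(k-1)^2\hat b s}$. The same computation for the product gives the sharper identity $\E\,\cE_k(v_1,s)\cE_k(v_2,s)=\E\exp\big(\langle\beta_k(v_1),\beta_k(v_2)\rangle(s)\big)\leqslant e^{(k-1)^2\hat b s}$, using $|h|\leqslant\hat b$ inside the cross bracket. Either way the integrand is at most $\hat b\, e^{(k-1)^2\hat b s}$.

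Integrating in $s$ then gives
\begin{align*}
\Eof{\langle\cT_k\rangle(t)}\leqslant (k-1)^2\hat{b}\,\Eof{\nu_k(\mbR^d)^2}\int^t_0 e^{(k-1)^2\hat b s}\differential{s}\leqslant \Eof{\nu_k(\mbR^d)^2}\,e^{(k-1)^2\hat b t}.
\end{align*}
By \Cref{thm:Weighted_SILT_V} (with $\rho\equiv1$), $\Eof{\nu_k(\mbR^d)^2}=\Eof{(T^u_k)^2}<\infty$. Multiplying by $e^{-3(k-1)^2\hat b t/2}$ yields a bound that tends to zero, so the $\limsup$ is at most any $c>0$, for instance $c=\Eof{\nu_k(\mbR^d)^2}$.

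The main obstacle is justifying the exchange of $\E_W$ with the double integral against the random measure $\nu_k$, which is defined only as an $L^p$-limit. I would handle this by working at the level of the approximations $T^u_{\ve,k}$ (smooth densities in $v$), applying the bound there, and passing to the limit $\ve\to0$ using the $L^2$ convergence from \Cref{thm:Weighted_SILT_V} together with Fatou's lemma. A secondary point is checking that the stochastic exponentials involved are true martingales. This holds because their brackets are bounded by deterministic linear functions of $s$, so Novikov's condition applies.
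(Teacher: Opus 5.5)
Your argument is correct and is essentially the paper's proof: bound the kernel $\sum_i \D b_{ii}*\D b_{ii}$ by a constant, condition on $u$ and apply Cauchy--Schwarz to $\E\,\cE_k(v_1,s)\cE_k(v_2,s)$, use the second moment of the stochastic exponential, then integrate in $s$. Your value $\E\,\cE_k(v,s)^2=e^{(k-1)^2\hat b s}$ is in fact the correct one, whereas the paper writes $e^{3(k-1)^2\hat b s/2}$ (an overestimate, harmless for an upper bound), so you get the stronger conclusion that the $\limsup$ is $0$; just drop the ``sharper identity'' $\E\,\cE_k(v_1,s)\cE_k(v_2,s)=\E\exp(\langle\beta_k(v_1,\cdot),\beta_k(v_2,\cdot)\rangle(s))$, which is false in general because the cross bracket is random (one only has $\cE_k(v_1,s)\cE_k(v_2,s)=M_s\exp(\langle\beta_k(v_1,\cdot),\beta_k(v_2,\cdot)\rangle(s))$ with $M$ the Dol\'eans exponential of $\beta_k(v_1,\cdot)+\beta_k(v_2,\cdot)$), although the inequality you draw from it still holds and your Cauchy--Schwarz route does not need it.
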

\begin{proof}[Proof of \Cref{thm:characteristics estimate}] Note that
\begin{align*}
\Eof{\langle\cT_k\rangle(t)}&{} \leq c(k-1)^2\Eof{\int_{\mbR^d}\int_{\mbR^d}\int^t_0\cE(v_1,s)\cE(v_2,s)\differential{s}\ \nu_k(\differential{v}_1)\nu_k(\differential{v}_2)}\\
&{}\leq c(k-1)^2\int_{\mbR^d}\int_{\mbR^d}\int^t_0(\Eof{\cE(v_1,s)^2})^{\frac{1}{2}}(\Eof{\cE(v_2,s)^2})^{\frac{1}{2}}\differential{s}\ \nu_k(\differential{v}_1)\nu_k(\differential{v}_2).
\end{align*}
Since 
$
\Eof{\cE(v_i,s)^2}=\myExp{\frac{3(k-1)^2\hat{b}s}{2}},\ i=1,2,
$
we have, for some positive constant $c$,
$$
\Eof{\langle\cT_k\rangle(t)}\leq c \myExp{\frac{3(k-1)^2\hat{b}t}{2}}.
$$
\end{proof}

\appendix

 \section{Stochastic flows}
  \label{sec: Stochastic flows}
  \begin{definition}
 \label{def:stochastic flows} A family of random maps $\{\phi_{s,t}:\mbR^d\times\Omega\to\mbR^d,\ 0\leqslant s\leqslant t<\infty\}$ is called a stochastic flow of homeomorphisms if for $\prob$-almost all $\omega\in\Omega:$  
 \begin{enumerate}[label=(\roman*)]
 \item $\phi_{s,t}(\omega)=\phi_{r,t}(\omega)\circ\phi_{s,r,}(\omega)\ \text{holds for all}\ s\leqslant r\leqslant t.$ 
 \item $\phi_{s,s}(\omega)$ is the identity map for all $s\geqslant 0.$
 \item $\phi_{s,t}(\omega):\mbR^d\to\mbR^d$ is a homeomorphism for all $s\leqslant t.$ 
 \end{enumerate}
 If additionally, the mapping  $\phi_{s,t}(\omega):\mbR^d\to\mbR^d$ is $k$-times continuously differentiable for all $s\leqslant t,$ then $\phi$ is a stochastic flow of $C^k$-diffeomorphisms.
  \end{definition}

  We refer the readers to \cite{Darling1992isotropic,baxendale1986isotropic,le1985isotropic,kunita1990stochastic,dimitroff2006some} for a rigorous exposition on stochastic flows.

\section*{Acknowledgements}
Olga Izyumtseva was  supported by the British Academy through grant number RaR{\textbackslash}100741, and in part by British Academy, Cara, Leverhulme Trust through grant LTRSF24{\textbackslash}100014.

\printcredits

\bibliographystyle{cas-model2-names}

\bibliography{refs}



\end{document}